\documentclass[12pt]{amsart}
\usepackage{graphicx,amssymb,latexsym,amsfonts,amsmath,amsthm,mathrsfs}

\usepackage{color,xcolor}
\usepackage{graphicx}
\usepackage{subfigure}
\usepackage{listing}
\usepackage[pagebackref]{hyperref}
\usepackage{url}
\usepackage{enumerate}
\usepackage{multirow}
\oddsidemargin0.1in
\evensidemargin0.1in
\textwidth6.5in
\advance\hoffset by -0.9 truecm

\newtheorem{theorem}{Theorem}[section]
\newtheorem{proposition}[theorem]{Proposition}
\newtheorem{corollary}[theorem]{Corollary}
\newtheorem{lemma}[theorem]{Lemma}

\theoremstyle{definition}
\newtheorem{construction}[theorem]{Construction}
\newtheorem{example}[theorem]{Example}
\newtheorem{definition}[theorem]{Definition}

\theoremstyle{remark}

\def\calB{{\mathcal B}}

\def\Cos{\mathrm{Cos}}

\def\l{\langle}
\def\r{\rangle}
\def\ov{\overline}

\def\a{\alpha}
\def\b{\beta}
\def\g{\gamma}
\def\o{\omega}

\def\Sig{{\it\Sigma}}
\def\Ga{{\it\Gamma}}
\def\Del{{\it\Delta}}

\def\Sym{{\rm Sym}}

\def\A{{\rm A}}
\def\S{{\rm S}}

\def\C{{\bf C}}

\def\val{\mathrm{val}}

\def\D{{\rm D}}

\def\Aut{\mathrm{Aut}}

\def\ZZ{{\mathbb Z}}
\def\K{\mathbf{K}}

\begin{document}

\title{Covers and pseudocovers of symmetric graphs}

\author{Cai Heng Li}

\address{SUSTech International Center for Mathematics, and Department of Mathematics, Southern University of Science and Technology\\
Shenzhen 518055, Guangdong\\
P. R. China}
\email{lich@sustech.edu.cn {\text{\rm(Li)}}}

\author{Yan Zhou Zhu}

\address{Department of Mathematics, Southern University of Science and Technology\\
Shenzhen 518055, Guangdong\\
P. R. China}
\email{zhuyz@mail.sustech.edu.cn {\text{\rm(Zhu)}}}

\keywords{Symmetric graph, Cover, Pseudocover, Praeger-Xu's graph, Tetravalent graph}

\begin{abstract}
We introduce the concept of {\it pseudocover}, which is a counterpart of {\it cover}, for symmetric graphs.
The only known example of pseudocovers of symmetric graphs so far was given by Praeger, Zhou and the first-named author a decade ago, which seems technical and hard to extend to obtain more examples.
In this paper, we present a criterion for a symmetric extender of a symmetric graph to be a pseudocover, and then apply it to produce various examples of pseudocovers, including
(1) with a single exception, each Praeger-Xu's graph is a pseudocover of a wreath graph;
(2) each connected tetravalent symmetric graph with vertex stabilizer of size divisible by $32$ has connected pseudocovers.
\end{abstract}

\maketitle

\section{Introduction}\label{sec:introduction}

Denote by $\Ga=(V,E)$ a finite undirected simple graph with vertex set $V$ and edge set $E$.
The vertex set $V$ is sometime written as $V\Ga$.
An \textit{arc} of $\Ga$ is an ordered pair of adjacent vertices, and an edge corresponds to two arcs.
A graph $\Ga$ is called \textit{symmetric} if its arcs are equivalent under the automorphism group $\Aut\Ga$.
In particular, we say $\Ga$ is $G$-symmetric for if $G\leqslant\Aut(\Ga)$ is transitive on the arc set.
A symmetric graph is also called an \textit{arc-transitive} graph.
The study of symmetric graphs is one of the main topics in algebraic graph theory, refer to \cite{biggs1974Algebraic,godsil2001Algebraic} for references.

For a graph $\Ga=(V,E)$, a group of automorphisms $G\leqslant\Aut\Ga$ is a permutation group on the vertex set $V$.
If $G$ is further transitive on arcs of $\Ga$, then $G$ is a transitive permutation group on $V$.
Studying symmetric graphs is thus naturally associated with studying permutation groups $G$.
In the case where $G$ is \textit{primitive} on $V$, namely, $\Ga$ is \textit{$G$-vertex-primitive}, the well-known O'Nan-Scott Theorem~\cite{liebeck1988NanScott} of permutation group theory and the theory of finite simple groups provide powerful tools for the study of symmetric graphs which are vertex-primitive, see \cite{li2001finite,praeger1993NanScott}.

On the other hand, assume that a $G$-symmetric graph $\Ga=(V,E)$ is not $G$-vertex-primitive, namely, $G$ is imprimitive on $V$.
Let $\calB$ be a non-trivial \textit{block system} for $G$ acting on $V$, that is, $1<|\calB|<|V|$.
Then $G$ induces a transitive permutation group $G^\calB$, and $G/G_{(\calB)}\cong G^\calB$, which is called a {\it quotient permutation group} of $G$ on $V$, where $G_{(\calB)}$ is the kernel of $G$ acting on $\calB$.
If $G_{(\calB)}=1$, then $G\cong G^\calB$ is faithful on $\mathcal{B}$.
In the quotient action, the point in $\calB$ corresponding to the block $B$ containing $\b$ is denoted by $\ov\b$, so that $\b\in B=\ov\b$.
Clearly, one block $\ov\b$ corresponds to $|B|$ choices for $\b$.
Corresponding to a quotient group, a $G$-symmetric graph $\Ga$ has a {\it quotient graph} $\Ga_\calB$ (with respect to $G$), which has vertex set $\calB$ such that for any two vertices $\ov\a,\ov\b\in\calB$,
\[\mbox{$(\ov\a,\ov\b)$ is an arc of $\Ga_\calB$ if and only if $(\a,\b)$ is an arc of $\Ga$ for some $\a\in\ov\a$ and $\b\in\ov\b$.}\]
We usually call the original graph $\Ga$ an {\it extender} of the quotient graph $\Ga_\mathcal{B}$, and call the block $\overline{\alpha}$ the \textit{image} of $\alpha$.

The quotient graph $\Ga_\calB$ is a smaller symmetric graph than the original graph $\Ga$ in the sense that $|V\Ga_\calB|<|V\Ga|$.
Naturally, one may wish to understand $\Ga$ through characterizing the smaller graph $\Ga_\calB$ and the relation between $\Ga$ and $\Ga_\calB$.
For two blocks $\ov\a$ and $\ov\b$ in $\calB$ which are adjacent vertices in $\Ga_\calB$, let $\Ga[\overline{\alpha},\overline{\beta}]$ be the subgraph of $\Ga$ induced on the subset of vertices $\overline{\alpha}\cup \overline{\beta}$ of $V\Ga$.
If $\Ga$ is $G$-symmetric, then the induced subgraph $\Ga[\overline{\alpha},\overline{\beta}]$ is independent of the choice of $(\ov\a,\ov\b)$.
The structure of $\Ga[\overline{\alpha},\overline{\beta}]$ and the valencies $\val(\Ga)$ and $\val(\Ga_\calB)$ are obviously important parameters for understanding the relation between $\Ga$ and $\Ga_\calB$, which leads to the following concepts.

\begin{definition}\label{def:pscover}
	Let $\Ga$ be a $G$-symmetric graph, and $\Ga_\mathcal{B}$ a quotient graph of $\Ga$, where $\calB$ is a block system of $G$ acting on $V\Ga$.
	Pick an arc $(\alpha,\beta)$ of $\Ga$.
	\begin{enumerate}[{\rm (A)}]
		\item If the induced subgraph $\Ga[\overline{\alpha},\overline{\beta}]$ is a regular graph, then $\Ga$ is called a \textit{multicover} of $\Ga_\mathcal{B}$; so that $\val(\Ga)$ is divisible by $\val(\Ga_\mathcal{B})$.

		\item If the induced subgraph $\Ga[\overline{\alpha},\overline{\beta}]$ is a perfect matching, then $\Ga$ is called a \textit{cover} of $\Ga_\mathcal{B}$; so that $\val(\Ga)=\val(\Ga_\mathcal{B})$.

		\item  If $\val(\Ga)=\val(\Ga_\mathcal{B})$ and $\Ga$ is not a cover of $\Ga_\mathcal{B}$, then $\Ga$ is called a \textit{pseudocover} of $\Ga_\mathcal{B}$.
	\end{enumerate}
\end{definition}

Constructing and characterizing symmetric covers of given symmetric graphs is an important approach for studying symmetric graphs, refer to \cite{brouwer1989Distanceregular,du1998arctransitive,du20052arctransitive}.
As noticed above, if a symmetric graph $\Ga$ is a cover of $\Ga_\mathcal{B}$, then $\Ga$ and $\Ga_\calB$ have the same valency.
Whether the converse statement is true or not had been an open problem until the first example of pseudocovers was discovered by Praeger, Zhou and the first-named author in~\cite{li2010Imprimitive}, see Example~\ref{exam:li4pscover}.
The presentation given in~\cite[Construction 3.5]{li2010Imprimitive} seems technical and hard to extend to obtain more examples.

{\bf Convention:}\ 
For a $G$-symmetric graph $\Ga$ where $G\leqslant\Aut\Ga$, the group $G$ induces an arc-transitive action on each quotient graph $\Ga_\calB$.
However, $G$ is not necessarily faithful on the vertex set $V\Ga_\calB$.
For convenience, we shall also call $\Ga_\calB$ a {\it $G$-symmetric} graph, and say $\Ga$ is a $G$-symmetric extender of $\Ga_\mathcal{B}$ if $\mathcal{B}$ is a block system of $G$.
With this convention, $G_\a$ is a subgroup of $G_{\ov\a}$ and acts on $\Ga_\calB(\ov\a)$ naturally, where $G_{\ov\a}$ is the subgroup of $G$ stabilizes the subset $\ov\a$.

We first present a criterion for deciding an extender to be a pseudocover.

\begin{theorem}\label{thm:pscover}
	Let $\Ga$ be a $G$-symmetric extender of $\Ga_\mathcal{B}$ such that $\val(\Ga)=\val(\Ga_{\mathcal{B}})$.
	Let $(\alpha,\beta)$ be an arc of $\Ga$, and $(\overline{\alpha},\overline{\beta})$ the corresponding image in $\Ga_\mathcal{B}$.
	Then the followings are equivalent:
	\begin{enumerate}[{\rm(a)}]
	\item $\Ga$ is a $G$-symmetric pseudocover of $\Ga_{\mathcal{B}}$;

	\item $G_\a$ is an intransitive subgroup of $G_{\ov\a}$ on $\Ga_\mathcal{B}(\overline{\alpha})$;

	\item $G_{\overline{\alpha}}\neq G_\alpha G_{\ov{\alpha}\ov{\beta}}$;

	\item $G_{\alpha\overline{\beta}}$, $G_{\overline{\alpha}\beta}$ and $G_{\alpha\beta}$ are pairwise different.
	\end{enumerate}
\end{theorem}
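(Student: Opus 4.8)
The plan is to reduce all four conditions to the single inequality $m\ge2$, where $m:=|\Ga(\a)\cap\ov\b|$ denotes the number of neighbours of $\a$ that lie in the block $\ov\b$. Write $k=\val(\Ga)=\val(\Ga_\calB)$; since $(\a,\b)$ is an arc, $k\ge1$. Under the standing hypothesis that these valencies agree, condition (a) is, by Definition~\ref{def:pscover}, equivalent to ``$\Ga$ is not a cover of $\Ga_\calB$'', and I will show that this, as well as each of (b), (c) and (d), is equivalent to $m\ge2$.

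For the setup: since $(\ov\a,\ov\b)$ is an arc of $\Ga_\calB$ we have $\ov\a\ne\ov\b$, so by arc-transitivity of $G$ on $\Ga$ every arc of $\Ga$ joins two distinct blocks; hence every block is an independent set and the quotient map restricts to a $G_\a$-equivariant map $\pi\colon\Ga(\a)\to\Ga_\calB(\ov\a)$, $\gamma\mapsto\ov\gamma$. As $G_\a$ is transitive on $\Ga(\a)$, the image of $\pi$ is the $G_\a$-orbit of $\ov\b$, of some size $d$; its fibres are the nonempty sets $\Ga(\a)\cap\ov\gamma$, which form a $G_\a$-invariant partition of $\Ga(\a)$ into $d$ parts, necessarily of common size $m=|\Ga(\a)\cap\ov\b|$. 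Hence
\[
k=|\Ga(\a)|=md,\qquad d=[G_\a:G_{\a\ov\b}],\qquad m=[G_{\a\ov\b}:G_{\a\b}],
\]
the last two indices by the orbit--stabilizer theorem applied to $\ov\b\in\Ga_\calB(\ov\a)$ and to $\b$ inside the part $\Ga(\a)\cap\ov\b$, whose setwise stabilizer in $G_\a$ equals $G_{\a\ov\b}$.

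The step I expect to require the real work is the claim that $\Ga$ is a cover of $\Ga_\calB$ if and only if $m=1$. One direction is immediate: if $m\ge2$ then $\a$ has two neighbours in $\ov\b$, so $\Ga[\ov\a,\ov\b]$ is not a matching. For the converse I would count edges. The number of arcs of $\Ga$ with tail in $\ov\a$ is $|\ov\a|\,k$; sending such an arc to the block containing its head gives a $G_{\ov\a}$-equivariant map onto $\Ga_\calB(\ov\a)$, on which $G_{\ov\a}$ is transitive, so each of the $k$ blocks adjacent to $\ov\a$ --- in particular $\ov\b$ --- is the head-block of exactly $|\ov\a|\,k/k=|\ov\a|$ of these arcs. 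Since, by arc-transitivity, every vertex of $\ov\a$ has either $0$ or $m$ neighbours in $\ov\b$, when $m=1$ all $|\ov\a|$ vertices of $\ov\a$ must be saturated, and symmetrically for $\ov\b$; thus $\Ga[\ov\a,\ov\b]$ is a perfect matching. Combined with the hypothesis, this gives (a) $\iff m\ge2$.

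It remains to match $m\ge2$ with (b), (c) and (d), which is routine. Since $md=k=|\Ga_\calB(\ov\a)|$, we have $m\ge2$ if and only if $d<|\Ga_\calB(\ov\a)|$, i.e.\ the $G_\a$-orbit of $\ov\b$ on $\Ga_\calB(\ov\a)$ is proper, i.e.\ $G_\a$ is intransitive on $\Ga_\calB(\ov\a)$: this is (b). Identifying $\Ga_\calB(\ov\a)$ with the coset space $G_{\ov\a}/G_{\ov\a\ov\b}$ and using that a subgroup $H\le G_{\ov\a}$ is transitive on it precisely when $HG_{\ov\a\ov\b}=G_{\ov\a}$ (applied to $H=G_\a$) converts (b) into (c). Finally, by the index formula $m\ge2$ says that $G_{\a\b}$ is a proper subgroup of $G_{\a\ov\b}$; conjugating by an element $g\in G$ with $\a^g=\b$ and $\b^g=\a$ (hence $\ov\a^g=\ov\b$, $\ov\b^g=\ov\a$), which satisfies $(G_{\a\b})^g=G_{\a\b}$ and $(G_{\a\ov\b})^g=G_{\ov\a\b}$, shows this is equivalent to $G_{\a\b}$ being a proper subgroup of $G_{\ov\a\b}$; and since $G_{\a\ov\b}\cap G_{\ov\a\b}=G_\a\cap G_\b=G_{\a\b}$ lies in each of $G_{\a\ov\b}$ and $G_{\ov\a\b}$, the three subgroups $G_{\a\ov\b}$, $G_{\ov\a\b}$, $G_{\a\b}$ are pairwise distinct exactly when $G_{\a\b}$ is a proper subgroup of $G_{\a\ov\b}$. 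Hence (d) $\iff m\ge2$, which together with the preceding completes the proof.
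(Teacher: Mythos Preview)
Your proof is correct and follows essentially the same route as the paper. The paper establishes (a)$\iff$(b) by invoking its Lemma~\ref{multi-cover} (the multicover characterization), (b)$\iff$(c) by Frattini, and (a)$\iff$(d) via the same key observation $G_{\alpha\overline{\beta}}\cap G_{\overline{\alpha}\beta}=G_{\alpha\beta}$ together with the index comparison $|G_\alpha:G_{\alpha\overline{\beta}}|$ versus $|\Sig(\overline{\alpha})|$. Your organization around the single parameter $m=|\Ga(\alpha)\cap\overline{\beta}|$ and the self-contained edge-counting argument for ``$m=1\Rightarrow$ perfect matching'' is a clean repackaging of the paper's Lemma~\ref{multi-cover}, but the underlying ideas coincide.
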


This theorem explores a key property of pseudocovers, and provides us with a tool for constructing covers and pseudocovers of symmetric graphs, which we will apply to study several important families of symmetric graphs.

In 1989, Praeger and Xu \cite{praeger1989characterization} explicitly characterized the class of $G$-symmetric graphs of order $rp^s$ and valency $2p$ with $p$ prime and $r\geqslant3$ such that $G$ has an elementary abelian normal $p$-subgroup that is non-semiregular on vertices, which we call {\it Praeger-Xu's graphs}.
See Definition~\ref{def:Praeger-Xu} for a definition of these graphs in terms of coset graphs.
We remark that the Praeger-Xu's graph with $s=1$, namely the graph is of order $rp$, is isomorphic to the so-called {\it wreath graph} $\mathbf{W}(r,p)=\C_r[\ov\K_p]$.

The next result gives a characterization of Praeger-Xu's graphs in the language of pseudocovers.

\begin{corollary}\label{cor:PX-graphs}
	With a single exception, each Praeger-Xu's graph is a symmetric pseudocover of $\mathbf{W}(r,p)$, for some integer $r\geqslant 3$ and prime $p$.
\end{corollary}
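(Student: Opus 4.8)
The plan is to display $\mathbf W(r,p)$ as a $G$-symmetric quotient of the Praeger-Xu graph $\mathrm{PX}(p,r,s)$ and then invoke Theorem~\ref{thm:pscover}. Recall from Definition~\ref{def:Praeger-Xu} that $\mathrm{PX}(p,r,s)$ has order $rp^{s}$, valency $2p$, and admits an arc-transitive group $G$ with a non-semiregular normal elementary abelian $p$-subgroup $P$; the $P$-orbits on vertices form a $G$-invariant system of $r$ \emph{clouds} of size $p^{s}$, and within each cloud there is a further $G$-invariant partition into $p$ sub-blocks of size $p^{s-1}$, obtained by labelling a vertex by a functional of its coset-graph data that is chosen symmetric under the orientation-reversing elements of $G$ (for $s$ odd one may use the value recorded at the central coordinate of the length-$s$ window, and for $s$ even a symmetric combination of the two central coordinates). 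Write $\calB$ for the resulting block system. One then checks that the quotient graph $\Ga_\calB$ is $\mathrm{PX}(p,r,1)=\mathbf W(r,p)=\C_r[\ov\K_p]$, still of valency $2p$. Hence $\mathrm{PX}(p,r,s)$ is a $G$-symmetric extender of $\mathbf W(r,p)$ with $\val(\mathrm{PX}(p,r,s))=\val(\mathbf W(r,p))$, so Definition~\ref{def:pscover} and Theorem~\ref{thm:pscover} apply to the pair.

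The second step is to check criterion (b) of Theorem~\ref{thm:pscover}: that $G_\alpha$ is intransitive on $\mathbf W(r,p)(\ov\alpha)=\Ga_\calB(\ov\alpha)$. If $\ov\alpha$ lies in the cloud indexed by $i\in\ZZ_r$, then $\Ga_\calB(\ov\alpha)$ is the disjoint union of the two size-$p$ clouds indexed by $i-1$ and $i+1$, so $|\Ga_\calB(\ov\alpha)|=2p$. The crucial observation, read off the coset-graph model, is that the $p$ neighbours of $\alpha$ lying in the cloud $i+1$ all have the same $\calB$-image: the block label on the cloud $i+1$ depends only on the coordinates shared by the windows at $i$ and at $i+1$, and is blind to the one \emph{new} coordinate in which those $p$ neighbours differ; the same holds for the cloud $i-1$. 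Consequently the image of $\Ga(\alpha)$ under the block map is a $G_\alpha$-invariant subset of $\Ga_\calB(\ov\alpha)$ of size at most $2$, and this is proper because $2<2p$; so $G_\alpha$ is intransitive on $\Ga_\calB(\ov\alpha)$. (Equivalently, $G_{\alpha\beta}$ is properly contained in each of $G_{\alpha\ov\beta}$ and $G_{\ov\alpha\beta}$, and these two are distinct, which is criterion (d).) By Theorem~\ref{thm:pscover}, $\mathrm{PX}(p,r,s)$ is a $G$-symmetric pseudocover of $\mathbf W(r,p)$.

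The collision in the second step requires the block label to be supportable on coordinates strictly interior to the window, hence requires $s$ to be large enough; for the smallest parameter values every $G$-invariant label necessarily meets a boundary coordinate, the neighbours of $\alpha$ cease to collapse, and this straightforward construction of a pseudocover breaks down. It is among these borderline cases that the single exceptional Praeger-Xu graph sits, and I would pin it down and dispose of it by direct inspection of its (small) coset graph. I expect the main obstacle to be exactly this tension: the refinement of the clouds must simultaneously be $G$-invariant, which forces it to be assembled from reflection-symmetric functionals of the window, and be insensitive to a boundary coordinate, which is what merges neighbours into a common block and thereby yields a pseudocover rather than a cover. One has to organise the coset-graph stabiliser computations so that both properties are visible at once, and then verify that there is precisely one parameter triple for which the two demands are incompatible; the remaining steps are routine bookkeeping with stabilisers, of just the kind Theorem~\ref{thm:pscover} is designed to streamline.
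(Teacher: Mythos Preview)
Your direct plan---build one block system $\calB$ with $|\calB|=rp$, identify $\Ga_\calB$ as $\mathbf{W}(r,p)$, and apply Theorem~\ref{thm:pscover}(b)---is a plausible route but not the one the paper takes. The paper never jumps from $C(p,r,s)$ to $\mathbf{W}(r,p)$ in a single step. Instead, Theorem~\ref{thm:PX-graphs}(b) shows that $C(p,r,s)$ is a pseudocover of $C(p,r,s-2)$ for every $s\geqslant3$, via a one-line comparison of $H_s\cap H_{s-2}^{g_s}$ with $H_s\cap H_s^{g_s}$ using criterion~(d) of Theorem~\ref{thm:pscover}; separately, Theorem~\ref{thm:PX-graphs}(c) exhibits an explicit overgroup $T>H_2$ with $\Cos(G,T,Tg_2T)\cong\mathbf{W}(r,p)$ and checks that this final step is a genuine \emph{cover}. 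The chain lemma (Lemma~\ref{lem:chain}) then says a composite of equal-valency extenders is a pseudocover as soon as one link is, so every $C(p,r,s)$ with $s\geqslant3$ is a pseudocover of $\mathbf{W}(r,p)$, and the exception $C(p,r,2)$ falls out of part~(c) with no case-hunting.

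Your sketch, as written, leaves the hardest parts undone. You assert that your label (central coordinate for $s$ odd, a symmetric combination for $s$ even) yields a $G$-invariant refinement of the clouds, but this has to be verified against the explicit action in Definition~\ref{def:Praeger-Xu}, where both $H_s$ and $g_s$ depend on the parity of $s$; in coset-graph language you must actually name the overgroup $K$ with $H_s<K$ and $|G:K|=rp$. You then assert that the quotient is $\mathbf{W}(r,p)$, but that identification is precisely the content of the paper's computation in part~(c) and is not automatic for an arbitrary index-$p^{s-1}$ overgroup of $H_s$. Finally, you do not identify the exception: writing that you ``would pin it down by direct inspection'' is not a proof. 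The paper's two-step descent $s\mapsto s-2$ together with Lemma~\ref{lem:chain} sidesteps all three issues and lets the exception emerge as the base case rather than as something to be hunted down afterwards.
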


The final theorem of this paper shows that `most' tetravalent symmetric graphs have connected pseudocovers.

\begin{theorem}\label{thm:pscover4}
Each connected tetravalent $G$-symmetric graph with vertex stabilizer of order divisible by $32$ has a connected $G$-symmetric pseudocover.
\end{theorem}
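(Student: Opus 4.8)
The plan is to deduce the theorem from the criterion of Theorem~\ref{thm:pscover} by translating it, via coset graphs, into the search for a single well-chosen subgroup of $G_\a$, and then to locate that subgroup using the large $2$-part of $G_\a$ together with the known local structure of connected tetravalent symmetric graphs. Concretely, fix an arc $(\a,\b)$ and write $\Ga=\Cos(G,G_\a,G_\a gG_\a)$ with $g$ interchanging $\a$ and $\b$, so that $g^2\in G_{\a\b}$ and $\langle G_\a,g\rangle=G$. For a proper subgroup $H<G_\a$ and an element $g'\in G$, the cosets of $H$ lying inside a fixed coset of $G_\a$ form a block of a $G$-invariant partition $\mathcal{C}$ of $[G:H]$, and the coset graph $\Ga':=\Cos(G,H,Hg'H)$ has quotient $\Ga'_{\mathcal{C}}=\Cos\bigl(G,G_\a,G_\a(Hg'H)G_\a\bigr)$, which is exactly $\Ga$ as soon as $g'\in G_\a gG_\a$. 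Thus it suffices to produce $H<G_\a$ and $g'\in G_\a gG_\a$ with $(g')^2\in H$, $\langle H,g'\rangle=G$, $|H:H\cap H^{g'}|=4$, and $H$ intransitive on the four vertices of $\Ga(\a)=\Ga'_{\mathcal{C}}(\ov H)$: for then $\Ga'$ is a connected tetravalent $G$-symmetric graph with $\Ga'_{\mathcal{C}}\cong\Ga$, and it is a pseudocover of $\Ga$ by Theorem~\ref{thm:pscover}(b).

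The intransitivity requirement becomes automatic if one takes $H\leq G_{\a\b}$, since such an $H$ fixes $\b\in\Ga(\a)$. The price is that one cannot keep $g'=g$: because $\langle G_{\a\b},g\rangle$ is the edge stabiliser $G_{\{\a,\b\}}$, a proper subgroup of $G$, no $H\leq G_{\a\b}$ satisfies $\langle H,g\rangle=G$. Hence $g'$ must be replaced by an element of $G_\a gG_\a$ moving the edge $\{\a,\b\}$ (for instance $g'=ag$ with $a\in G_\a$, $a(\b)\neq\b$), and $H$ must be small enough inside $G_{\a\b}$ for conjugation by $g'$ to displace it to an index-$4$ intersection while $\langle H,g'\rangle$ is still all of $G$. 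This is where the hypothesis enters. Since $|G_\a:G_{\a\b}|=4$, blocks of size $2$ are impossible for a tetravalent pseudocover (an edge-count shows the induced bipartite graph between adjacent blocks would then be regular, hence a perfect matching, i.e. a cover), so in fact $H\subsetneq G_{\a\b}$ and $|G_\a:H|\geq 8$; moreover, with $32\mid|G_\a|$ and the local group imprimitive one gets $|G_\a^{[1]}|\geq 4$ and hence $G_{\a\b}^{[1]}=G_\a^{[1]}\cap G_\b^{[1]}\neq 1$, which is the $2$-local structure the construction needs.

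To exhibit such an $H$ I would invoke the structure theory of connected tetravalent $G$-symmetric graphs descending from Praeger and Xu~\cite{praeger1989characterization}: when $32\mid|G_\a|$ the local action $G_\a^{\Ga(\a)}$ is imprimitive, $G_\a^{[1]}$ is a nontrivial $2$-group, and the descending chain $G_\a^{[1]}\geq G_{\a\b}^{[1]}\geq\cdots$ has the characteristic long ``Praeger--Xu'' shape. In the extreme case $\Ga$ — or its quotient by the largest $G$-semiregular normal subgroup — is a Praeger--Xu graph, and Corollary~\ref{cor:PX-graphs}, together with a transport of the pseudocover along that normal cover, already supplies a connected $G$-symmetric pseudocover. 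In general one reads off from the chain a subgroup $H\leq G_{\a\b}$ lying over a suitable proper subgroup of the relevant $2$-group of $G_{\a\b}$, and an element $g'\in G_\a gG_\a$ conjugating $H$ to a subgroup meeting it in index exactly $4$; the remaining point is to check $\langle H,g'\rangle=G$, a reachability statement about the $G$-action which holds provided $H$ retains the ``directions'' that, together with $g'$, generate $G$ out of $G_\a$ and $g$.

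I expect the main obstacle to be exactly this simultaneous balancing at the final step: keeping $\val(\Ga')=4$, arranging that the bipartite graph induced between adjacent blocks is \emph{not} a perfect matching (so that $\Ga'$ is a genuine pseudocover and not a cover — this is what forces the blocks, hence the index $|G_\a:H|$, to be large, and is the reason the hypothesis is $32\mid|G_\a|$ rather than merely $8\mid|G_\a|$), and, most delicately, keeping $\langle H,g'\rangle=G$ so that $\Ga'$ stays connected. Formal manipulation of coset graphs readily yields a pseudocover \emph{extender} of $\Ga$, but connectedness is not formal; securing it is precisely where the detailed amalgam and stabiliser information for tetravalent symmetric graphs has to be fed in rather than argued by soft means.
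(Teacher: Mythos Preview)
Your proposal is a plan, not a proof: the decisive step --- actually producing $H$ and $g'$ with $|H:H\cap H^{g'}|=4$ and $\langle H,g'\rangle=G$ --- is never carried out. You write that ``one reads off from the chain'' such an $H\le G_{\a\b}$ and then concede that connectedness is ``the main obstacle''; that is exactly the content of the theorem, and nothing you say pins it down. The appeal to Corollary~\ref{cor:PX-graphs} plus ``transport along a normal cover'' is also not a proof: it is not clear that a pseudocover of a normal quotient lifts to a $G$-symmetric pseudocover of the original graph, and in any case the general (non--Praeger--Xu) situation is left entirely to hope.

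More importantly, your strategic choice $H\le G_{\a\b}$ is the wrong turn. The paper's construction does \emph{not} sit inside the arc stabiliser. Using that $G_\o/G_\o^{[1]}\cong\D_8=\langle\bar x\rangle{:}\langle\bar y\rangle$ with $G_{\o\o'}/G_\o^{[1]}=\langle\bar y\rangle$, one takes
\[
L=\langle G_\o^{[1]},\,xy\rangle\ <\ G_\o,\qquad xy\notin G_{\o\o'},
\]
after first adjusting $g$ so that $g^2\in G_\o^{[1]}$. Then $L/G_\o^{[1]}=\langle\overline{xy}\rangle\cong\ZZ_2$ is intransitive on $\Sig(\o)$ (two orbits of size~$2$), $L\cap G_{\o\o'}=G_\o^{[1]}$ gives $L\cap L^g=G_{\o\o'}^{[1]}$ and hence valency~$4$, and connectedness is a three-line computation: $\langle L,g\rangle$ contains $G_\o^{[1]}$ and $(G_\o^{[1]})^g=G_{\o'}^{[1]}$, hence $y^g$, hence $y$ and $x$, hence $G_\o$ and $g$, hence $G$. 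The paper even checks that the other index-$4$ subgroups of $G_\o$ containing $G_\o^{[1]}$ --- in particular $G_{\o\o'}$ itself --- yield \emph{disconnected} graphs with the same $g$, which is precisely the pitfall you were trying to dodge by switching to $g'$. The clean fix is not to shrink into $G_{\a\b}$ and perturb $g$, but to step \emph{sideways} to the reflection $xy$; that single explicit choice replaces all of the unspecified ``structure theory'' in your sketch.
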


\section{Coset graphs and extenders}\label{sec:pre}

A well-known important notion for studying symmetric graphs is the \textit{coset graph representation}, which we describe below.

Let $\Ga=(V,E)$ be a $G$-symmetric graph, and fix an edge $\{\a,\b\}\in E$.
Then the vertex set $V$ may be identified with the set of right cosets $[G:G_\a]=\{G_\a x\mid x\in G\}$, and then the action of $G$ on $V$ is equivalent to the action of $G$ on $[G:G_\a]$ by right multiplication of elements in $G$.
Further, since $G$ is symmetric on $\Ga$, there exists an element $g\in G$ such that $(\alpha,\beta)^g=(\beta,\alpha)$.
Clearly, $g^2\in G_\a$ and $g$ can be chosen as a $2$-element, namely, an element of order $2^m$ for some integer $m$.
Then, in terms of cosets, the right coset $G_\a x$ corresponds to the vertex $\alpha^x\in V$.
In particular, $G_\a$ and $G_\alpha g$ correspond to the vertices $\a$ and $\beta$, respectively.
Thus the neighborhood $\Ga(\a)=\{G_\a gh\mid h\in G_\a\}$, consisting right cosets of $G_\a$ contained in the double coset $G_\a gG_\a$.
It implies that $G_\a x$ and $G_\a y$ are adjacent if and only if $yx^{-1}\in G_\a gG_\a$.

Conversely, for an abstract group $G$, a subgroup $H<G$ and an element $g\in G$ such that $g^2\in H$, one can define a graph $\Ga=(V,E)$, called a \textit{coset graph} and denoted by $\Cos(G,H,HgH)$, where
\[\begin{array}{l}
	V=[G:H],\\
	E=\bigl\{\{Hx,Hy\}\mid yx^{-1}\in HgH\bigr\}.
\end{array}\]
We notice that, with this definition, $G$ is not necessarily faithful on the vertex set $V=[G:H]$.
The quotient permutation group $G^V$ is a subgroup of $\Aut\Ga$ by the coset action of $G$ on $[G:H]$.
As usual, the largest normal subgroup of $G$ which is contained in $H$ is called the {\it core} of $H$ in $G$.
Then the kernel of $G$ acting on $[G:H]$ equals the core of $H$ in $G$.
We will say $H$ is \textit{core-free} in $G$ if the core of $H$ in $G$ is trivial.

\begin{lemma}\label{lem:coset-graph}
	Suppose that $\Sig$ is a symmetric graph and $G$ is a group acting on the arc set of $\Sig$ transitively.
	Then $\Sig=\mathrm{Cos}\bigl(G,H,H g H\bigr)$, where $H=G_\a$ and $g^2 \in H$ such that $\alpha^g$ is adjacent to $\alpha$.
	Moreover, the following statements hold:
	\begin{enumerate}[{\rm(a)}]
		\item $G$ is a subgroup of $\Aut\Sig$ if and only if $H$ is core-free in $G$;
		\item $\Sig$ has valency equal to $\bigl|H:H\cap H^g\bigr|$;
		\item the action of $H=G_\alpha$ on the neighborhood $\Sig(\a)$ is equivalent to $H$ acting on $[H:H\cap H^g]$ by right multiplication;
		\item $\Sig$ is connected if and only if $G=\langle H,g\rangle$.
	\end{enumerate}
\end{lemma}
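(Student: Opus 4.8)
The plan is to run the standard orbit–stabilizer dictionary: identify $V\Sig$ with $[G:H]$ so that the $G$-action becomes right multiplication on cosets, verify that adjacency in $\Sig$ translates into the double-coset condition defining $\Cos(G,H,HgH)$, and then read off (a)--(d) by tracking the relevant invariants through this identification. Fix an edge $\{\a,\b\}$ of $\Sig$ and put $H=G_\a$. Since $G$ is arc-transitive it is in particular vertex-transitive, so $\a^x\mapsto Hx$ is a well-defined bijection $V\Sig\to[G:H]$ which is equivariant for the right-multiplication action of $G$ on $[G:H]$ (the only subtlety here is the left/right convention, checked by $(\a^x)^g=\a^{xg}\mapsto H(xg)=(Hx)g$). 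Arc-transitivity also supplies $g\in G$ with $(\a,\b)^g=(\b,\a)$; then $g^2$ fixes $\a$, so $g^2\in H$, and $\a^g=\b\in\Sig(\a)$ (one may in addition choose $g$ to be a $2$-element, as noted before the lemma, but this is not needed here).

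Next I would pin down adjacency. Because $G$ is arc-transitive, $H=G_\a$ is transitive on $\Sig(\a)$, so $\Sig(\a)=\b^H=\{\a^{gh}\mid h\in H\}$. For general vertices $\a^x,\a^y$, applying the automorphism induced by $x^{-1}$ shows $\{\a^x,\a^y\}$ is an edge iff $\a^{yx^{-1}}\in\Sig(\a)$, and by the previous sentence this holds iff $yx^{-1}\in HgH$. That is exactly the edge rule of $\Cos(G,H,HgH)$, so $\Sig\cong\Cos(G,H,HgH)$ compatibly with the $G$-actions. Part (a) is then immediate from the remark preceding the lemma: the coset action of $G$ on $[G:H]$ has kernel equal to the core of $H$ in $G$, so $G$ acts faithfully on $V\Sig$ — equivalently $G\leqslant\Aut\Sig$ — precisely when $H$ is core-free.

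For (b) and (c) I would apply orbit–stabilizer to the transitive action of $H=G_\a$ on $\Sig(\a)$: the stabilizer in $H$ of $\b$ is $H\cap G_\b=H\cap G_{\a^g}=H\cap H^g$, whence $\val(\Sig)=|\Sig(\a)|=|H:H\cap H^g|$ and the $H$-action on $\Sig(\a)$ is equivalent to $H$ acting on $[H:H\cap H^g]$ by right multiplication. For (d), set $K=\langle H,g\rangle$. The neighbours of a vertex $Hx$ are the cosets $H(ghx)$ with $h\in H$, so walks from the base coset $H$ reach exactly the cosets of the form $Hgh_n gh_{n-1}\cdots gh_1$ with $n\geqslant 0$ and $h_i\in H$ (the case $n=0$ giving $H$ itself). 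Since $g^2\in H$, for every $k\in K$ the coset $Hk$ has this form, while conversely each such product lies in $K$; hence the connected component of $H$ is the $K$-orbit $\{Hk\mid k\in K\}$, which equals all of $[G:H]$ iff $K=G$. Therefore $\Sig$ is connected iff $G=\langle H,g\rangle$.

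I do not expect a genuine obstacle, since this is a foundational fact whose steps are routine coset bookkeeping; the only points that warrant a little care are the left/right-multiplication conventions in the equivariance $V\Sig\cong[G:H]$, and the elementary normal-form observation in (d) that $g^2\in H$ lets one dispense with inverses of $g$, so that reachability from the base vertex coincides with the coset orbit of $\langle H,g\rangle$.
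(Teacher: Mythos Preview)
Your proof is correct and follows the same standard orbit--stabilizer dictionary that the paper sketches in the paragraphs immediately preceding the lemma; the paper does not provide a separate formal proof but simply records the lemma after that discussion. Your treatment of parts (b)--(d) is in fact more explicit than anything the paper writes, but the underlying argument is identical.
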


Let $\Sig=\Cos(G,H,HgH)$ and $\Ga=\Cos(G,L,LgL)$ be symmetric graphs, and assume that $L<H$.
Then $[H:L]=\{Lh:h\in H\}$ is a block, and $[G:H]$ admits a block system $\mathcal{B}$ of $[G:L]$.
Hence $\Ga$ is an extender of $\Sig$.
Conversely, suppose that $\Ga$ is a $G$-symmetric extender of $\Sig$.
By definition, both two $G$-symmetric graphs can be presented as coset graphs.
The following lemma identifies such coset graphs.
We remark that $G$ is not necessarily faithful on the vertex set $[G:H]$.

\begin{proposition}\label{prop:extender}
	Let $\Ga$ and $\Sig$ be symmetric graphs.
	Then $\Ga$ is a $G$-symmetric extender of $\Sig$ if and only if $\Ga=\Cos(G,L,LgL)$ and $\Sig=\Cos(G,H,H gH)$, where $L<H$ and $g\in G\setminus L$ such that $g^2\in L$.
\end{proposition}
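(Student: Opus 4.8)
The plan is to prove both implications by translating between the vertex/arc picture and the coset picture via Lemma~\ref{lem:coset-graph}, applied twice with one and the same group element $g$. For the forward implication, suppose $\Ga$ is a $G$-symmetric extender of $\Sig$, witnessed by a nontrivial block system $\mathcal{B}$ with $\Sig=\Ga_{\mathcal{B}}$. I would fix an arc $(\alpha,\beta)$ of $\Ga$ lying over an arc $(\ov\alpha,\ov\beta)$ of $\Sig$ and choose $g\in G$ with $(\alpha,\beta)^g=(\beta,\alpha)$, so that $\alpha^g=\beta$ and $g^2\in G_\alpha$. Setting $L=G_\alpha$ and $H=G_{\ov\alpha}$: since $G$ is vertex-transitive on $\Ga$ and $\ov\alpha$ is a block, $H$ acts transitively on the block $\ov\alpha$ with point stabilizer $L$, so $|H:L|=|\ov\alpha|>1$ and hence $L<H$, while $g\notin L$ because $\alpha^g=\beta\neq\alpha$. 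Applying Lemma~\ref{lem:coset-graph} to $\Ga$ at $\alpha$ with this $g$ gives $\Ga=\Cos(G,L,LgL)$; and since $g$ also satisfies that $\ov\alpha^g=\ov\beta$ is adjacent to $\ov\alpha$ and $g^2\in L\leqslant H$, applying the same lemma to $\Sig$ at $\ov\alpha$ with the same $g$ yields $\Sig=\Cos(G,H,HgH)$.

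For the converse, I would start from $\Ga=\Cos(G,L,LgL)$ and $\Sig=\Cos(G,H,HgH)$ with $L<H$, $g\in G\setminus L$ and $g^2\in L$. Because $\Sig$ is a symmetric graph it has at least one arc, which forces $H\neq G$; together with $L<H$ this makes $\mathcal{B}=\{\,[Hx:L]:Hx\in[G:H]\,\}$ a nontrivial $G$-block system on $V\Ga=[G:L]$, with block $[H:L]$, and with quotient action equivalent under $[Hx:L]\leftrightarrow Hx$ to the coset action of $G$ on $[G:H]$. It then remains to identify $\Ga_{\mathcal{B}}$ with $\Sig$: a pair $([Hx:L],[Hy:L])$ is an arc of $\Ga_{\mathcal{B}}$ exactly when $h_2yx^{-1}h_1^{-1}\in LgL$ for some $h_1,h_2\in H$, and, since $L\leqslant H$, this holds if and only if $yx^{-1}\in HLgLH=HgH$, i.e. if and only if $(Hx,Hy)$ is an arc of $\Sig$. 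Hence $\Ga_{\mathcal{B}}=\Sig$, and $\Ga$ is a $G$-symmetric extender of $\Sig$.

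Everything here is elementary, relying only on the facts that distinct right cosets of $H$ are disjoint, each right coset of $L$ sits inside a unique right coset of $H$, and $L\leqslant H$ gives $HLgLH=HgH$. The one place where I would take care is keeping the left/right conventions of the coset-graph adjacency relation $yx^{-1}\in LgL$ consistent throughout the quotient-graph computation. In the forward implication the only genuinely substantive point is that the element $g$ extracted from an arc of $\Ga$ automatically realizes the corresponding arc of $\Sig$, which is precisely what permits reusing it for the second coset-graph representation; I would highlight this step rather than treat it as routine.
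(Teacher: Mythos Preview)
Your proposal is correct and follows essentially the same approach as the paper: for the forward implication you pick an arc $(\alpha,\beta)$, set $L=G_\alpha$, $H=G_{\ov\alpha}$, and use the arc-reversing element $g$ for both coset-graph representations, exactly as the paper does. Your converse is more detailed than the paper's (which simply asserts that $\Sig$ is the quotient of $\Ga$ with respect to the block system $[G:H]$), but the added adjacency computation via $HLgLH=HgH$ is correct and merely fills in what the paper leaves implicit.
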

\begin{proof}
	Suppose that $\Ga$ is a $G$-symmetric extender of $\Sig$.
	Let $(\alpha,\beta)$ be an arc of $\Ga$ and $(\overline{\alpha},\overline{\beta})$ the corresponding image of in $\Sig$.
	Set $L=G_\alpha$ and $H=G_{\overline{\alpha}}$.
	Then $L<H$.
	Since $\Ga$ is $G$-symmetric, there exists $g\in G$ such that $(\alpha,\beta)^g=(\beta,\alpha)$.
	Then $g^2\in L=G_\alpha$ and $\Ga=\mathrm{Cos}\bigl(G,L,L g L\bigr)$.
	Since $(\alpha,\beta)^g=(\beta,\alpha)$, it implies that $(\overline{\alpha},\overline{\beta})^g=(\overline{\beta},\overline{\alpha})$.
	By definition, $\Sig=\mathrm{Cos}\bigl(G,H,HgH\bigr)$.

	Conversely, suppose that $\Ga=\Cos(G,L,LgL)$ and $\Sig=\Cos(G,H,H gH)$, where $L<H$ and $g\in G\setminus L$ such that $g^2\in L$.
	Then $[G:H]$ admits a block system of $G$ acting on $[G:L]$.
	It follows that $\Sig$ is a quotient graph of $\Ga$ with respect to this block system.
\end{proof}

Suppose that $(\alpha,\beta)$ is an arc in $\Ga$ and $(\overline{\alpha},\overline{\beta})$ is the corresponding image in $\Ga_\mathcal{B}$.
Observe that the blocks $\left\{\overline\beta^x|x\in G_\alpha\right\}$, namely, the blocks with some vertices adjacent to $\alpha$, admit a block system of $G_{\alpha}$ acting on $\Ga(\alpha)$.
Hence if $\Ga$ is \textit{locally primitive}, that is, $G_\alpha$ is primitive on $\Ga(\alpha)$, then $\Ga$ is a pseudocover of $\Ga_\mathcal{B}$ only when $\Ga$ is not connected (otherwise, see Proposition~\ref{cor:disc-pdcover} and Figure~\ref{fig:disconnected}).

\begin{corollary}\label{coro:locallyprim}
	Suppose that $\Ga$ is a connected $G$-symmetric graph with $\Ga_\mathcal{B}$ as its quotient.
	If $\Ga$ is locally primitive, then $\Ga$ is a cover of $\Ga_\mathcal{B}$ if and only if $\val(\Ga)=\val(\Ga_\mathcal{B})$.

	In particular, the above statement holds when $\Ga$ has a prime valency; also when $\Ga$ is $(G,s)$-arc-transitive with $s\geqslant 2$.
\end{corollary}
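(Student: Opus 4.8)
The plan is to prove the nontrivial implication by using local primitivity to decide which of the two alternatives, ``cover'' or ``pseudocover'', must hold. One direction is free: if $\Ga$ is a cover of $\Ga_\mathcal{B}$ then $\val(\Ga)=\val(\Ga_\mathcal{B})$ by the definition of cover (Definition~\ref{def:pscover}), and local primitivity is not needed for it. So assume $\val(\Ga)=\val(\Ga_\mathcal{B})$; then $\Ga$ is either a cover or a pseudocover of $\Ga_\mathcal{B}$ (Definition~\ref{def:pscover}), and by Theorem~\ref{thm:pscover} it is a pseudocover precisely when $G_\alpha$ is intransitive on $\Ga_\mathcal{B}(\overline\alpha)$. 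It therefore suffices to show that $G_\alpha$ is transitive on $\Ga_\mathcal{B}(\overline\alpha)$. We may assume $\val(\Ga)\geq 2$, since a connected graph of valency $1$ is $\K_2$, which admits no nontrivial block system.

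Consider the $G_\alpha$-equivariant map $\phi\colon\Ga(\alpha)\to V\Ga_\mathcal{B}$ given by $\gamma\mapsto\overline\gamma$. By the observation preceding the corollary, the fibers of $\phi$ constitute a block system of $G_\alpha$ acting on $\Ga(\alpha)$; as $\Ga$ is locally primitive, this block system is trivial, so either (i) $\phi$ is injective or (ii) $\phi$ is constant. In case (i), the action of $G_\alpha$ on the image $Z:=\phi(\Ga(\alpha))$ is equivalent to its action on $\Ga(\alpha)$, which is transitive; moreover $\overline\alpha\notin Z$, for otherwise $\overline\alpha$ would be a $G_\alpha$-fixed point of the transitive set $Z$, forcing $|Z|=1$, i.e.\ $\val(\Ga)=1$, against our assumption. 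Hence $Z\subseteq\Ga_\mathcal{B}(\overline\alpha)$, and then $|Z|=\val(\Ga)=\val(\Ga_\mathcal{B})=|\Ga_\mathcal{B}(\overline\alpha)|$ yields $Z=\Ga_\mathcal{B}(\overline\alpha)$. Thus $G_\alpha$ is transitive on $\Ga_\mathcal{B}(\overline\alpha)$ and $\Ga$ is a cover, as required.

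It remains to rule out case (ii), and this is precisely where connectedness is needed. If $\phi$ is constant, then every neighbor of $\alpha$ lies in a single block $\overline\beta$; since the same holds at every vertex (by arc-transitivity), iterating this along edges from $\alpha$ shows that the connected component of $\alpha$ is contained in $\overline\alpha\cup\overline\beta$, so $V\Ga=\overline\alpha\cup\overline\beta$ by connectedness. If $\overline\alpha=\overline\beta$ this forces $|\mathcal{B}|=1$; if $\overline\alpha\neq\overline\beta$ then $\mathcal{B}=\{\overline\alpha,\overline\beta\}$ and $\Ga_\mathcal{B}$ has no loops (each vertex of $\overline\alpha$ has all its neighbors in $\overline\beta$), so $\Ga_\mathcal{B}=\K_2$ and $\val(\Ga)=\val(\Ga_\mathcal{B})=1$, contradicting $\val(\Ga)\geq 2$. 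In either sub-case $\mathcal{B}$ is not a nontrivial block system, contrary to hypothesis; hence case (ii) cannot occur and the equivalence is established.

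For the remaining assertions, it is enough to note that both hypotheses force local primitivity: if $\val(\Ga)$ is a prime $p$, then $G_\alpha$ is a transitive permutation group of prime degree on $\Ga(\alpha)$ and hence primitive; and if $\Ga$ is $(G,s)$-arc-transitive with $s\geq 2$, then $G_\alpha$ is $2$-transitive, hence primitive, on $\Ga(\alpha)$. The one genuinely delicate step I foresee is the disposal of case (ii): one must resist treating the ``collapsing neighborhood'' configuration as evidently impossible, and instead extract the contradiction carefully from connectedness together with the constraint $1<|\mathcal{B}|<|V\Ga|$.
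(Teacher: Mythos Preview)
Your proof is correct and follows the same idea the paper sketches in the paragraph immediately preceding the corollary: the fibers of $\gamma\mapsto\overline\gamma$ form a $G_\alpha$-block system on $\Ga(\alpha)$, local primitivity forces it to be trivial, and the constant case is excluded by connectedness. The paper gives no formal proof beyond that one-sentence observation (and a reference to the disconnected pseudocover of Proposition~\ref{cor:disc-pdcover}), so your careful treatment of case~(ii) and your explicit invocation of Theorem~\ref{thm:pscover} simply flesh out what the paper leaves implicit; one minor wording slip is that in the sub-case $\mathcal{B}=\{\overline\alpha,\overline\beta\}$ the block system \emph{is} nontrivial when $|V\Ga|>2$, and the contradiction you actually obtain there is with $\val(\Ga)\geqslant2$, as you correctly derive just before your summary sentence.
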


Let $\Sig=\Cos(G,H,HgH)$ and $\Ga=\Cos(G,L,LgL)$ be symmetric graphs with $L<H$.
Then $\Sig=\Ga_\mathcal{B}$ with $\mathcal{B}$ admitted by $[G:H]$.
Assume that $\mathcal{B}$ consists of the orbits of a normal subgroup $N\lhd G$.
In this case, $\Sig$ is called a {\it normal quotient} of $\Ga$, and denoted by $\Ga_N$.
Let $\alpha$ and $\beta$ be the vertices in $\Ga$ correspond to $L$ and $Lg$, respectively.
Then $\overline{\alpha}$ and $\overline{\beta}$ correspond to $H$ and $Hg$ in $\Sig$, respectively.
Observe that $H=LN$, $\overline{\alpha}=\{\alpha^z=Lz\mid z\in N\}$ and $\overline{\beta}=\{\beta^z=\alpha^{gz}=Lgz\mid z\in N\}$.
Hence $\alpha^z\in\overline{\alpha}$ is adjacent to $\beta^z=\alpha^{gz}\in\overline{\beta}$ for each $z\in N$.
Therefore, the subgraph $\Ga[\overline{\alpha},\overline{\beta}]$ is regular and $\Ga$ is a multicover of $\Sig$.
This proves the following conclusion.

\begin{corollary}\label{coro:cosetquot}
	If a symmetric graph $\Sig$ is a normal quotient of a symmetric graph $\Ga$, then $\Ga$ is a multicover of $\Sig$.
\end{corollary}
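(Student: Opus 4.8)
The plan is to work entirely in the coset-graph language supplied by Proposition~\ref{prop:extender}. Writing $\Ga=\Cos(G,L,LgL)$ and $\Sig=\Cos(G,H,HgH)$ with $L<H$, $g\in G\setminus L$ and $g^2\in L$, I identify $L=G_\alpha$ and $H=G_{\overline\alpha}$, where $\alpha$ and $\beta=\alpha^g$ are the vertices corresponding to $L$ and $Lg$, and $\overline\alpha,\overline\beta$ their images. The first step is to locate the normal subgroup $N\lhd G$ whose orbits constitute the block system $\calB$ inside this picture. Since $\calB$ consists of $N$-orbits, $N$ fixes each block setwise, so $N\leqslant G_{\overline\alpha}=H$ and hence $LN\leqslant H$; comparing indices via $|H:L|=|\overline\alpha|=|\alpha^N|=|N:N\cap L|=|LN:L|$ forces $H=LN$. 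This is the only genuinely structural point, and I expect it to be the main (though modest) obstacle, since it is where normality of $N$ is really used; everything afterward is a transitivity argument.

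With $H=LN$ in hand, the second step is to describe the two adjacent blocks explicitly as $N$-orbits of cosets:
\[
\overline\alpha=\alpha^N=\{Lz\mid z\in N\},\qquad \overline\beta=\beta^N=\{\alpha^{gz}=Lgz\mid z\in N\}.
\]
Now I invoke the fact that each $z\in N$ acts on the vertex set as a graph automorphism. Since $\alpha$ is adjacent to $\beta=\alpha^g$, applying $z$ yields that $\alpha^z\in\overline\alpha$ is adjacent to $\alpha^{gz}=\beta^z\in\overline\beta$. This shows in particular that every vertex of $\overline\alpha$ has positive degree in the induced bipartite subgraph $\Ga[\overline\alpha,\overline\beta]$, and symmetrically for $\overline\beta$.

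The final step is to upgrade this to regularity. The element $g$ satisfies $(\overline\alpha,\overline\beta)^g=(\overline\beta,\overline\alpha)$, so $g$ interchanges the two blocks, while every element of $N$ fixes each block setwise; consequently the subgroup $\langle N,g\rangle$ stabilises the set $\overline\alpha\cup\overline\beta$ and therefore restricts to a group of automorphisms of the induced subgraph $\Ga[\overline\alpha,\overline\beta]$. Because $N$ is transitive on each of the two $N$-orbits $\overline\alpha$ and $\overline\beta$ while $g$ carries $\overline\alpha$ onto $\overline\beta$, this restricted group is transitive on $\overline\alpha\cup\overline\beta$, so $\Ga[\overline\alpha,\overline\beta]$ is vertex-transitive, hence a regular graph. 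By Definition~\ref{def:pscover}(A) this is exactly the assertion that $\Ga$ is a multicover of $\Sig$. If one prefers to avoid $g$, the same conclusion follows from $N$-transitivity on each side together with the edge count $|\overline\alpha|\,d_{\overline\alpha}=|\overline\beta|\,d_{\overline\beta}$ and the equality $|\overline\alpha|=|\overline\beta|$ of block sizes.
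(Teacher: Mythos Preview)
Your proof is correct and follows essentially the same route as the paper's own argument: both work in the coset-graph framework, identify $H=LN$, write the two adjacent blocks as $N$-orbits $\{Lz\mid z\in N\}$ and $\{Lgz\mid z\in N\}$, and use that each $z\in N$ carries the edge $\{\alpha,\beta\}$ to an edge $\{\alpha^z,\beta^z\}$ inside $\Ga[\overline\alpha,\overline\beta]$ to conclude regularity. You supply more justification than the paper does (the index computation forcing $H=LN$, and the explicit vertex-transitivity of $\langle N,g\rangle$ on $\overline\alpha\cup\overline\beta$), but the underlying argument is the same.
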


Note that $\Ga$ is a cover of $\Ga_N$ if and only if $\val(\Ga)=\val(\Ga_N)$, and we say $\Ga$ is a \textit{normal cover} of $\Sig$ in this case.
We remark that a $G$-symmetric cover $\Ga$ of $\Sig$ for $G\leqslant\Aut\Sig$ might be a normal cover with respect to $\Aut\Ga$, but not with respect to $G$.

\begin{example}\label{exam:cube}
	{\rm
	Let $\Sig=\K_4$ with vertex set $\Omega$, and $G\leqslant\Sym(\Omega)=\S_4$ be symmetric on $\Sig$.
	Then $(G,G_\o)=(\A_4,\ZZ_3)$ or $(\S_n,\S_3)$.
	Since $\val(\Sig)=3$ is a prime, $\Sig$ has no connected symmetric pseudocovers by Corollary~\ref{coro:cosetquot}.

	Suppose that $\Ga=\Cos(G,L,LgL)$ is a proper $G$-symmetric cover of $\Sig$.
	Then $G_\o$ has a proper subgroup $L$ of order divisible by $3$, and so $(G,G_\o)=(\S_4,\S_3)$.
	It follows that $L=\ZZ_3$, and $\Ga$ is a connected $\S_4$-symmetric cubic graph with $4\times 2=8$ vertices.
	Thus $\Ga$ is the cube graph, and $\Ga$ is a normal cover $\Sig$ with respect to $\Aut\Ga\cong\mathbb{Z}_2\times\S_4$, but not with respect to $G$.
	\qed
	}
\end{example}

For pseudocovers of symmetric graphs, the extremal case $L=H\cap H^g$ is an interesting case, which we study now.

Let $\Sig=\Cos(G,H,HgH)$, and let $L=H\cap H^g$.
Then $\Ga:=\Cos(G,L,LgL)$ is an extender of $\Sig$.
Since $|L:L\cap L^g|=1$, the graph $\Ga$ is disconnected of valency $1$ and have the same number of edges as $\Sig$.
The graph $\Ga$ is actually the so-called \textit{truncation} of $\Sig$.

Assume that $\Sig$ is of valency $m$.
Let $\Del=\Ga[\ov\K_m]$, the \textit{lexicographic product} of $\ov\K_m$ by $\Ga$, in other words, $\Del$ is the resulted graph by replacing each edge of the truncation $\Ga$ by a complete bipartite graph $\K_{m,m}$.
Clearly, $\Del$ is a disconnected pseudocover of $\Sig$. Figure~\ref{fig:disconnected} shows examples of such disconnected pseudocovers for complete graphs $\K_3$ and $\K_4$.

\begin{figure}[!ht]
	\begin{center}
		\begin{tabular}{cc}
			\includegraphics[scale=0.8]{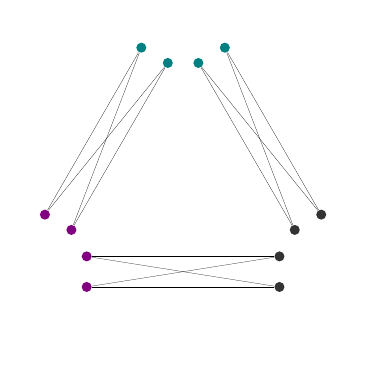}
			&
			\includegraphics[scale=0.8]{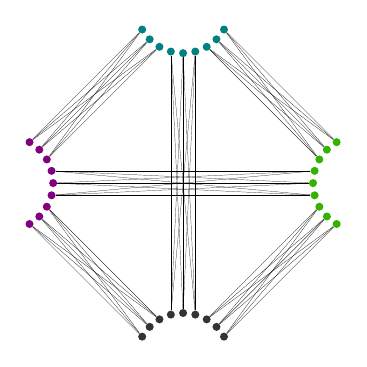}
		\end{tabular}
	\end{center}
	\caption{Disconnected pseudocovers of $\K_3$ and $\K_4$}\label{fig:disconnected}
\end{figure}

This observation leads to an interesting result.

\begin{proposition}\label{cor:disc-pdcover}
	Each symmetric graph, which is not a perfect matching, has a disconnected symmetric pseudocover.
\end{proposition}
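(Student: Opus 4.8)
The construction needed is the one displayed in the paragraph preceding the statement, so the plan is simply to turn that sketch into a proof: the disconnected pseudocover will be $\Delta=\Gamma_0[\ov\K_m]$, where $m=\val(\Sigma)$ and $\Gamma_0$ is the truncation of $\Sigma$, and the work is to pin down a group under which $\Delta$ is symmetric with $\Sigma$ as a quotient, and then to read off the valency, the failure of the covering condition, and the disconnectedness.

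First I would set $m=\val(\Sigma)$ and note that we may assume $m\geqslant2$, since a symmetric graph of valency $1$ is a perfect matching and one of valency $0$ is the degenerate edgeless case. By Lemma~\ref{lem:coset-graph} write $\Sigma=\Cos(G,H,HgH)$ with $H=G_\alpha$, $\beta:=\alpha^g\in\Sigma(\alpha)$ and $g^2\in H$, and put $L=H\cap H^g=G_{\alpha\beta}$; then $L<H$ with $|H:L|=m$ by Lemma~\ref{lem:coset-graph}(b), $g\in G\setminus L$, and $g^2\in L$. By Proposition~\ref{prop:extender}, $\Gamma_0:=\Cos(G,L,LgL)$ is a $G$-symmetric extender of $\Sigma$, and as noted before the statement it has valency $|L:L\cap L^g|=1$; so $\Gamma_0$ is a perfect matching on $|G:L|=m\,|V\Sigma|$ vertices, and the partition $\mathcal{B}_0$ of $V\Gamma_0$ into the $G$-blocks lying over the vertices of $\Sigma$ satisfies $(\Gamma_0)_{\mathcal{B}_0}=\Sigma$.

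Next I would supply a group for $\Delta:=\Gamma_0[\ov\K_m]$, whose vertex set is $V\Gamma_0\times\{1,\dots,m\}$ with $(u,i)\sim(v,j)$ if and only if $u\sim_{\Gamma_0}v$. The natural choice is the wreath product $G^{*}:=\S_m\wr G$, with base group $(\S_m)^{V\Gamma_0}$ and top group $G$ acting on $V\Gamma_0$, acting imprimitively on $V\Delta$: the base group permutes each fibre $\{u\}\times\{1,\dots,m\}$, and the top group moves the fibres as $G$ moves $V\Gamma_0$. Each of these generators visibly preserves adjacency in $\Delta$. For arc-transitivity I would argue that, given two arcs $((u,i),(v,j))$ and $((u',i'),(v',j'))$ of $\Delta$, one first uses arc-transitivity of $G$ on $\Gamma_0$ to carry $(u,v)$ to $(u',v')$, and then --- using crucially that $u'\neq v'$, so that they index distinct coordinates of the base group --- uses the $u'$- and $v'$-components of $(\S_m)^{V\Gamma_0}$ independently to send $i\mapsto i'$ and $j\mapsto j'$. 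Pulling $\mathcal{B}_0$ back to $\mathcal{B}:=\{B\times\{1,\dots,m\}:B\in\mathcal{B}_0\}$ gives a $G^{*}$-invariant partition, hence a $G^{*}$-block system, and a direct check shows $\Delta_{\mathcal{B}}=(\Gamma_0)_{\mathcal{B}_0}=\Sigma$; so $\Delta$ is a $G^{*}$-symmetric extender of $\Sigma$.

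To finish, $\val(\Delta)=m\cdot\val(\Gamma_0)=m=\val(\Sigma)$, while the subgraph of $\Delta$ induced between two adjacent blocks contains a copy of $\K_{m,m}$ --- namely the join of the two fibres lying over the unique $\Gamma_0$-edge between the blocks --- and so, as $m\geqslant2$, is not a perfect matching; hence $\Delta$ is a $G^{*}$-symmetric pseudocover of $\Sigma$ by Definition~\ref{def:pscover}. Finally $\Gamma_0$ is a disjoint union of $|E\Sigma|$ edges, with $|E\Sigma|\geqslant2$ since a symmetric graph with a single edge is $\K_2$, of valency $1$; so $\Delta$ is a disjoint union of $|E\Sigma|\geqslant2$ copies of $\K_{m,m}$ and is disconnected. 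I do not anticipate any genuine obstacle; the one point needing care is that the wreath product must be formed over the given arc-transitive group $G$, not over $\Aut\Gamma_0$ (the latter need not preserve $\mathcal{B}_0$, which would destroy the quotient $\Sigma$), together with the routine handling of the cases $m\leqslant1$ at the outset.
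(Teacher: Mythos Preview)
Your proof is correct and follows essentially the same construction as the paper: form the truncation $\Gamma_0$, take the lexicographic product $\Delta=\Gamma_0[\ov\K_m]$, exhibit a wreath-product group acting arc-transitively on $\Delta$ with $\Sigma$ as a quotient, and read off the pseudocover and disconnectedness conditions. The only cosmetic differences are that the paper uses $\ZZ_m\wr G$ rather than $\S_m\wr G$ for the acting group and identifies the quotient via the coset-graph formalism (taking $\widetilde H=R^n{:}H$ and noting its core is $R^n$) instead of directly via block systems as you do; your version also spells out the disconnectedness, which the paper's proof simply asserts in its final line.
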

\begin{proof}
	Let $\Sig=\Cos(G,H,HgH)$ be a $G$-symmetric graph, and let $\Ga=\Cos(G,L,LgL)$ and $\Del=\Ga[\ov\K_m]$ be the graphs defined above.
	Then $\val(\Del)=\val(\Sig)=m$ and $\val(\Ga)=1$.
	By definitions, the graph $\Ga$ is $G$-symmetric and $\Sig$ is a quotient graph of $\Ga$

	Let $V\Ga=\{\a_1,...,\a_n\}$, $\alpha=\alpha_1$ and $\beta=\alpha_2$.
	We may assume that $\alpha$ is adjacent to $\beta$ and $L=H\cap H^g$ fixes the vertex $\alpha$.
	Since $\val(\Ga)=1$, the neighborhood of $\alpha$ in $\Ga$ is $\{\beta\}$ and $L$ also fixes the vertex $\beta$.
	Label the vertices of $\Del$ as
	\[V\Del:=\{(i,\a_j)\mid 1\leqslant i\leqslant m,1\leqslant j\leqslant n\}.\]
	Then the adjacency relation `$\sim$' of $\Del$ is such that
	\[\mbox{$(i,\a_j)\sim(i',\a_{j'})$ in $\Del$ $\Longleftrightarrow\a_j\sim\a_{j'}$ in $\Ga$.}\]
	Let $R\cong\mathbb{Z}_m$ be the regular cyclic group generated by the cycle $(12\dots m)$, and let
	\[\widetilde{G}=R\wr G=R^n{:}G,\]
	with action of $\widetilde{G}$ on $V\Del$ defined by
	\[(i,a_j)^{(r_1,...,r_n;x)}=(i^{r_j},a_j^x),\mbox{ where $(r_1,...,r_n;x)\in \widetilde{G}=R^n{:}G$}.\]
	It follows that $\widetilde{G}$ is a group of automorphisms of the graph $\Del$.
	By the definition of the action of $\widetilde{G}$ on $V\Del$, we observe that the stabilizer of $\widetilde{G}$ of the vertex $(1,\a)$ is
	\[\widetilde{G}_{\alpha}=\langle (1,r_2,...,r_n)\mid r_j\in R,\ j=2,...,n\rangle{:}G_{\alpha}\cong R^{n-1}{:}L.\]
	Note that the neighborhood of $(1,\alpha)$ in $\Del$ is
	\[\Del(1,\alpha)=\left\{(i,\beta)\mid i=1,...,m\right\}.\]
	It follows that $\widetilde{G}_{\alpha}\cong R^{n-1}{:}L$ is transitive on the neighborhood of the vertex $(1,\a)$ in $\Del$.
	It implies that $\widetilde{G}$ is transitive on arcs of $\Del$, and hence
	\[\Del=\Cos(\widetilde{G},\widetilde{G}_{\alpha},\widetilde{G}_{\alpha}g\widetilde{G}_{\alpha}).\]
	
	Let $\widetilde{H}=R\wr H=R^n{:}H\leqslant\widetilde{G}$.
	Then $\widetilde{H}>\widetilde{G}_{\alpha}\cong R^{n-1}{:}L$ and $\Cos(\widetilde{G},\widetilde{H},\widetilde{H}g\widetilde{H})$ is a quotient of $\Del$ by Proposition~\ref{prop:extender}.
	Note that the normal subgroup $R^n\lhd R^n\wr H=\widetilde{H}$ is the core of $\widetilde{H}$ in $\widetilde{G}$.
	Since $\widetilde{G}/R^n=G$ and $\widetilde{H}/R^n=H$, we have
	\[\Cos(\widetilde{G},\widetilde{H},\widetilde{H}g\widetilde{H})=\Cos(G,H,HgH)=\Sig.\]
	Hence $\Sig$ is a quotient graph of $\Del$ such that $\val(\Sig)=\val(\Del)=m$.

	Since $(\alpha,\beta)$ is an arc in $\Ga$, the vertices $(1,\alpha)$ and $(1,\beta)$ are adjacent in $\Del$.
	The images of $(1,\alpha)$ and $(1,\beta)$ in $\Sig$ are
	\[\begin{aligned}
		\overline{(1,\alpha)}&=\left\{(1,\alpha)^{\widetilde{h}}\mid\widetilde{h}\in\widetilde{H}\right\}=\left\{(i,\alpha^h)\mid h\in H,\ i=1,...,m\right\}\mbox{ and }\\
		\overline{(1,\beta)}&=\left\{(1,\alpha)^{\widetilde{h}g}\mid\widetilde{h}\in\widetilde{H}\right\}=\left\{(i,\alpha^{hg})\mid h\in H,\ i=1,...,m\right\}\mbox{, respectively.}
	\end{aligned}\]
	Since $(1,\alpha)$ is adjacent to $(i,\beta)$ for all $i=1,...,m$ and $m\geqslant 2$ in $\Del$, the subgraph generated by blocks $\overline{(1,\alpha)}$ and $\overline{(1,\beta)}$ in $\Del$ is not a perfect matching.
	Therefore, $\Del$ is a disconnected symmetric pseudocover of $\Sig$.
\end{proof}

\section{Multicovers, covers and pseudocovers}

Let $\Sig$ and $\Ga$ be two $G$-symmetric graphs, and assume that $\Sig$ is connected and $\Ga$ is an extender of $\Sig$.
Let $\a$ be a vertex of $\Ga$, and let $\ov\a$ be the image of $\a$ in the quotient graph $\Sig$, namely, $\ov\a$ is a block containing $\a$.
For convenience, if $\a$ is adjacent to a vertex $\b\in\ov\b$, then we say that $\a$ is adjacent to $\ov\b$ and $\overline{\beta}$ is adjacent to $\alpha$.
We remark again that the action $G$ on the vertex set of $\Sig$ is not necessarily faithful, and $G_\a$ naturally acts on $\Sig(\ov\a)$ as a subgroup of $G_{\ov\a}$.

\begin{lemma}\label{multi-cover}
	Let $\Ga$ and $\Sig$ be $G$-symmetric graphs such that $\Ga$ is an extender of $\Sig$.
	Then the following statements are equivalent, where $\a$ is a vertex of $\Ga$:
	\begin{enumerate}[{\rm(a)}]
		\item $\Ga$ is a multicover of $\Sig$;
		\item $\a$ is adjacent to each vertex $\ov\b$ in $\Sig(\ov\a)$;
		\item $G_\a$ is transitive on $\Sig(\ov\a)$.
	\end{enumerate}
\end{lemma}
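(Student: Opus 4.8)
The plan is to prove the easy equivalence $\mathrm{(b)}\Leftrightarrow\mathrm{(c)}$ first, and then close the loop with $\mathrm{(a)}\Rightarrow\mathrm{(b)}$ and $\mathrm{(b)}\Rightarrow\mathrm{(a)}$. The backbone is the observation recorded just before Corollary~\ref{coro:locallyprim}: the partition of the neighborhood $\Ga(\a)$ according to which block of $\calB$ a neighbor lies in is a block system for $G_\a$ acting on $\Ga(\a)$, and the map $\b\mapsto\ov\b$ carries $\Ga(\a)$, $G_\a$-equivariantly, onto the set $S:=\{\,\ov\b\mid\b\in\Ga(\a)\,\}\subseteq\Sig(\ov\a)$ of blocks adjacent to $\a$. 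Since $\Ga$ is $G$-symmetric, $G_\a$ is transitive on $\Ga(\a)$, hence on $S$, and $S$ is a nonempty $G_\a$-invariant subset of $\Sig(\ov\a)$. Now $\mathrm{(b)}$ is exactly the assertion $S=\Sig(\ov\a)$: if it holds, then transitivity of $G_\a$ on $S$ is transitivity on $\Sig(\ov\a)$, which is $\mathrm{(c)}$; and if $\mathrm{(c)}$ holds, then the nonempty $G_\a$-invariant subset $S$ must be all of $\Sig(\ov\a)$, which is $\mathrm{(b)}$. (As $\Ga$ is vertex-transitive, $\mathrm{(b)}$ holds for one vertex iff for all, so I shall pass between vertices freely.)

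For $\mathrm{(a)}\Rightarrow\mathrm{(b)}$ I fix an arc $(\ov\a,\ov\b)$ of $\Sig$. By the definition of the quotient graph there are $\a'\in\ov\a$, $\b'\in\ov\b$ with $\a'$ adjacent to $\b'$, so $\a'$ has at least one neighbor inside $\ov\b$. Since the block stabilizer $G_{\ov\a}$ is transitive on $\ov\a$, the induced subgraph $\Ga[\ov\a]$ is regular, of some valency $d_0$; hence for every $\gamma\in\ov\a$ the degree of $\gamma$ in $\Ga[\ov\a,\ov\b]$ equals $d_0+|\Ga(\gamma)\cap\ov\b|$. Regularity of $\Ga[\ov\a,\ov\b]$ then forces $|\Ga(\a)\cap\ov\b|=|\Ga(\a')\cap\ov\b|\geqslant 1$, so $\a$ is adjacent to $\ov\b$; and since $\Ga[\ov\a,\ov\b]$ does not depend on the chosen arc of $\Sig$, the same holds for every $\ov\b\in\Sig(\ov\a)$, which is $\mathrm{(b)}$.

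For $\mathrm{(b)}\Rightarrow\mathrm{(a)}$ I use $\mathrm{(c)}$ as well. Transitivity of $G_\a$ on $\Ga(\a)$ makes all parts of the block system $S=\Sig(\ov\a)$ on $\Ga(\a)$ have the common size $\val(\Ga)/\val(\Sig)$, so $|\Ga(\gamma)\cap\ov\delta|=\val(\Ga)/\val(\Sig)$ for every vertex $\gamma$ and every $\ov\delta\in\Sig(\ov\gamma)$ (which also makes the divisibility in part~(A) of Definition~\ref{def:pscover} transparent). To upgrade this to regularity of $\Ga[\ov\a,\ov\b]$, note that both $G_{\ov\a}$ and, by $\mathrm{(c)}$, its subgroup $G_\a$ are transitive on $\Sig(\ov\a)$; hence $G_{\ov\a}=G_\a G_{\ov\a\ov\b}$, and since $G_\a$ fixes $\a\in\ov\a$ it follows that $G_{\ov\a\ov\b}$ is already transitive on $\ov\a$, and conjugating by an element of $G$ that interchanges $(\ov\a,\ov\b)$ with $(\ov\b,\ov\a)$ shows $G_{\ov\a\ov\b}$ is transitive on $\ov\b$ as well. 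That same swapping element preserves $\ov\a\cup\ov\b$, so the setwise stabilizer of $\{\ov\a,\ov\b\}$ in $G$ is transitive on $\ov\a\cup\ov\b$; as it preserves adjacency, $\Ga[\ov\a,\ov\b]$ is vertex-transitive, hence regular, which is $\mathrm{(a)}$.

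The lemma is in essence an unwinding of the definitions, so I do not expect a genuine obstacle; the only thing to watch is bookkeeping — namely that a block $\ov\a$ may itself span edges of $\Ga$ (absorbed into the uniform valency $d_0$ of $\Ga[\ov\a]$), and that regularity of $\Ga[\ov\a,\ov\b]$ must be read as one common degree over the whole of $\ov\a\cup\ov\b$ rather than mere uniformity within each part. The single place where the hypothesis really does the work is the identity $G_{\ov\a}=G_\a G_{\ov\a\ov\b}$ in the last step, which is precisely the input $\mathrm{(c)}$ (equivalently $\mathrm{(b)}$) in disguise.
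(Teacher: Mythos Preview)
Your proof is correct and follows essentially the same cycle of implications as the paper; the only variation is in establishing regularity of $\Ga[\ov\a,\ov\b]$, where the paper argues directly that the degree $|\Ga(\a)\cap\ov\b|$ is constant by arc-transitivity of $G$ on $\Ga$, while you pass through the Frattini factorization $G_{\ov\a}=G_\a G_{\ov\a\ov\b}$ to obtain vertex-transitivity of the induced subgraph (a mild strengthening). As an aside, your bookkeeping worry about edges inside $\ov\a$ is unnecessary: arc-transitivity of $G$ on $\Ga$ forces either every edge or no edge of $\Ga$ to lie within a block, and in the former case $\Sig$ is edgeless and the lemma is vacuous, so you may take $d_0=0$ throughout.
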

\begin{proof}
	Assume that $\Ga$ is a multicover of $\Sig$.
	Let $(\ov\a,\ov\b)$ be an arc of $\Sig$.
	Then $\Ga[\ov\a,\ov\b]$ is a regular graph, and thus each vertex of $\Ga$ in the block $\ov\a$ is adjacent to some vertices in $\ov\b$ in $\Ga$.
	So part~(a) implies part~(b).

	Now, assume that $\a$ is adjacent to each vertex $\ov\b$ in $\Sig(\ov\a)$.
	Since $G$ is arc-transitive on $\Ga$, the stabilizer $G_\a$ is transitive on the neighborhood $\Ga(\a)$, and $G_\a$ is transitive on $\Sig(\ov\a)$, namely, part ~(b) implies part~(c).

	Finally, suppose that $G_\a$ is transitive on $\Sig(\ov\a)$.
	Let $\b\in\Ga(\a)$.
	Then $\ov\b\in\Sig(\ov\a)$, and so $\ov\b^{G_\a}=\Sig(\ov\a)$.
	Thus $\a$ is adjacent to $\b^\ell$ for any $\ell\in G_\a$, which is in $\ov\b^\ell\in\Sig(\ov\a)$, namely, $\a$ is adjacent to each vertex $\ov\g\in\Sig(\ov\a)$.
	Note $G_{\overline{\alpha}}$ is transitive on the block $\overline{\alpha}$, it follows that each vertex in $\overline{\alpha}$ is adjacent to $\overline{\beta}$.
	In addition, the degree of $\alpha$ in $\Ga[\overline{\alpha},\overline{\beta}]$ is equal to $|\Ga(\alpha)\cap \overline{\beta}|$, which is independent of the choices of adjacent vertices $\alpha,\beta$ in $\Ga$.
	It follows that $\Ga[\ov\a,\ov\b]$ is a regular graph, and $\Ga$ is a multicover of $\Sig$.
	So part~(c) implies part~(a).
\end{proof}

Now we are ready to prove Theorem~\ref{thm:pscover}.

\begin{proof}[Proof of Theorem~\ref{thm:pscover}]
	Since $\val(\Sig)=\val(\Ga)$, part~(a) and part~(b) are equivalent by Lemma~\ref{multi-cover}.
	It is obvious that part~(b) and part~(c) are equivalent by Frattini's argument.

	Note that the subgroup $G_\a\leqslant G_{\ov\a}$ acts on $\Sig(\ov\a)$ naturally.
	The orbit $\ov\b^{G_\a}$ of this action has length $|G_\a:G_{\a\ov\b}|$, which is less than or equal to $|\Sig(\ov\a)|$.
	Moreover,
	\[\begin{array}{rcl}
	G_{\a\b}=G_{\a\ov\b} &\Longleftrightarrow&
	|G_\a:G_{\a\ov\b}|=|G_\a:G_{\a\b}|=\val(\Ga)=\val(\Sig)=|\Sig(\ov\a)|\\
	&\Longleftrightarrow&\mbox{$G_\a$ is transitive on $\Sig(\ov\a)$,}\\
	&\Longleftrightarrow&\mbox{$\Ga$ is a cover of $\Sig$.}
	\end{array}\]
	Additionally, $G_{\alpha\overline{\beta}}=G_{\overline{\alpha}\beta}$ if and only if $G_{\alpha\overline{\beta}}=G_{\alpha\overline{\beta}}\cap G_{\overline{\alpha}\beta}=G_{\alpha\beta}$.
	Thus either $G_{\alpha\beta}=G_{\overline{\alpha}\beta}=G_{\alpha\overline{\beta}}$ or these three groups are pairwise different.
	This proves that part~(a) and part~(d) are equivalent.
\end{proof}

In the end of this section, we prove that a pseudocover of a pseudocover of a graph is a pseudocover of the graph.

\begin{lemma}\label{lem:chain}
	Let $\Ga_i=\mathrm{Cos}\bigl(G,H_i,H_i g H_i\bigr)$ for $1\leqslant i\leqslant n$ with $3\leqslant n$, and
	assume that $\Ga_i$'s have the same valency, and $H_1<H_2<\cdots<H_n$.
	Then
	\begin{enumerate}[{\rm (a)}]
		\item $\Ga_1$ is a symmetric cover of $\Ga_n$ if and only if $\Ga_i$ is a cover of $\Ga_{i+1}$ for each $1\leqslant i<n$;
		\item $\Ga_1$ is a symmetric pseudocover of $\Ga_n$ if and only if $\Ga_i$ is a pseudocover of $\Ga_{i+1}$ for some $i$ with $1\leqslant i<n$.
	\end{enumerate}
\end{lemma}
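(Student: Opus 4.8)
The plan is to recast the covering properties group-theoretically via Theorem~\ref{thm:pscover} and then reduce everything to a single index computation. Write $K_i=H_i\cap H_i^g$, so that $K_i$ is the arc-stabilizer of $\Ga_i$; since $H_1<H_2<\cdots<H_n$ we get $K_1\leqslant K_2\leqslant\cdots\leqslant K_n$, and by Lemma~\ref{lem:coset-graph}(b) the common valency is $m=|H_i:K_i|$ for every $i$. As $\Ga_i$ is an extender of $\Ga_{i+1}$ of the same valency it is either a cover or a pseudocover of $\Ga_{i+1}$, so by Theorem~\ref{thm:pscover}(a)$\Leftrightarrow$(c) (applied to the pair $\Ga_i,\Ga_{i+1}$, where $G_\alpha=H_i$, $G_{\ov\alpha}=H_{i+1}$ and $G_{\ov\alpha\ov\beta}=H_{i+1}\cap H_{i+1}^g=K_{i+1}$) it is a cover iff $H_{i+1}=H_iK_{i+1}$. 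Since $|H_iK_{i+1}|=|H_i|\,|K_{i+1}|/|H_i\cap K_{i+1}|$ and $|H_i|/|K_i|=|H_{i+1}|/|K_{i+1}|=m$, this is equivalent to $|H_i\cap K_{i+1}|=|K_i|$, hence (as $K_i\leqslant H_i\cap K_{i+1}$) to $H_i\cap K_{i+1}=K_i$. In the same way $\Ga_1$ is a cover of $\Ga_n$ iff $H_1\cap K_n=K_1$. Therefore part~(a) is the assertion
\[
H_1\cap K_n=K_1\ \Longleftrightarrow\ H_i\cap K_{i+1}=K_i\ \text{for every }1\leqslant i<n ,
\]
and part~(b) is its negation, since ``same valency and not a cover'' is precisely the definition of a pseudocover; so it is enough to prove~(a).

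For the implication ``$\Leftarrow$'', assume $H_i\cap K_{i+1}=K_i$ for all $i$. Then $H_1\cap K_n\subseteq H_{n-1}\cap K_n=K_{n-1}$, whence $H_1\cap K_n\subseteq H_1\cap K_{n-1}$; iterating gives $H_1\cap K_n\subseteq H_1\cap K_{n-1}\subseteq\cdots\subseteq H_1\cap K_2=K_1$, and since $K_1\leqslant H_1\cap K_n$ always holds we conclude $H_1\cap K_n=K_1$, i.e.\ $\Ga_1$ is a cover of $\Ga_n$.

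The implication ``$\Rightarrow$'' is the main obstacle: there is no evident reason why the cover property should persist when one passes \emph{up} the chain from level~$1$ to an intermediate level~$i$, and the resolution is a counting argument that uses the equal-valency hypothesis to bound $|H_i\cap K_{i+1}:K_i|$. Fix $i$ with $1\leqslant i<n$ and assume $H_1\cap K_n=K_1$. Comparing $|H_i:K_1|=|H_i:H_1|\,|H_1:K_1|$ with $|H_i:K_1|=|H_i:K_i|\,|K_i:K_1|$ and using $|H_1:K_1|=|H_i:K_i|=m$ gives $|K_i:K_1|=|H_i:H_1|$; and since $H_1\cap K_n\leqslant H_i\cap K_n\leqslant K_n$, any element of $H_i\cap K_n$ lying in $H_1$ automatically lies in $H_1\cap K_n$, so $|H_i\cap K_n:H_1\cap K_n|\leqslant|H_i:H_1|$. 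Now evaluate $|H_i\cap K_n:K_1|$ along the two subgroup chains
\[
K_1\leqslant H_1\cap K_n\leqslant H_i\cap K_n
\qquad\text{and}\qquad
K_1\leqslant K_i\leqslant H_i\cap K_{i+1}\leqslant H_i\cap K_n ,
\]
and equate; with $|H_1\cap K_n:K_1|=1$ and $|K_i:K_1|=|H_i:H_1|$ this becomes
\[
|H_i\cap K_n:H_1\cap K_n|=|H_i\cap K_n:H_i\cap K_{i+1}|\cdot|H_i\cap K_{i+1}:K_i|\cdot|H_i:H_1| .
\]
The left-hand side is at most $|H_i:H_1|$ and every factor on the right is a positive integer, so all of them equal $1$; in particular $H_i\cap K_{i+1}=K_i$, i.e.\ $\Ga_i$ is a cover of $\Ga_{i+1}$. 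As $i\in\{1,\dots,n-1\}$ was arbitrary this proves~(a), and~(b) follows by taking negations as noted above.
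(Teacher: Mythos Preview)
Your proof is correct. Both you and the paper reduce the question to the group-theoretic criteria of Theorem~\ref{thm:pscover}, and your $\Leftarrow$ argument is essentially the paper's telescoping argument rewritten in terms of the $K_i=H_i\cap H_i^g$. The $\Rightarrow$ direction, however, is handled differently. The paper proceeds by peeling off one layer at a time: from $H_1\cap H_n^g=H_1\cap H_1^g$ it sandwiches to get $H_1\cap H_{n-1}^g=H_1\cap H_1^g$ (so $\Ga_1$ covers $\Ga_{n-1}$), and from $H_n=H_1(H_n\cap H_n^g)$ it sandwiches to get $H_n=H_{n-1}(H_n\cap H_n^g)$ (so $\Ga_{n-1}$ covers $\Ga_n$); then it recurses on the shorter chain. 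You instead fix $i$ and run a single index computation, using the equal-valency identity $|K_i:K_1|=|H_i:H_1|$ together with the general bound $|H_i\cap K_n:H_1\cap K_n|\leqslant|H_i:H_1|$ to force $|H_i\cap K_{i+1}:K_i|=1$ directly. Your route avoids invoking both forms of the cover criterion simultaneously and needs no induction, at the cost of a slightly more elaborate counting step; the paper's route is shorter per step but relies on toggling between criteria~(c) and~(d) of Theorem~\ref{thm:pscover}.
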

\begin{proof}
	The two parts of the lemma are clearly equivalent, so we only need to verify part~(a).

	Denote the arc $(H_i,H_ig)$ of $\Ga_i$ by $(\alpha_i,\beta_i)$ for $1\leqslant i\leqslant n$.
	Let $1\leqslant i <j\leqslant n$, then $\Ga_j$ is a quotient of $\Ga_i$, and the quotient image of the arc $(\alpha_i,\beta_i)$ in $\Ga_{j}$ is the arc $(\alpha_{j},\beta_{j})$.
	Note that $G_{\alpha_i\beta_i}=H_i\cap H_i^g$ and $G_{\alpha_i\beta_j}=H_i\cap H_j^g$.
	Since $\val(\Ga_i)=\val(\Ga_j)$, by the equivalence of parts~(a), (c) and (d) of Theorem~\ref{thm:pscover}, $\Ga_i$ is a cover of $\Ga_{j}$ if and only if one of the following equalities hold:
	\begin{enumerate}[{\rm(1)}]
		\item $G_{\alpha_i\beta_i}=G_{\alpha_i\beta_j}\Longleftrightarrow H_i\cap H_i^g=H_i\cap H_j^g$;
		\item $G_{\alpha_{i}}=G_{\alpha_{i}}G_{\alpha_j\beta_j}\Longleftrightarrow H_{i}=H_i(H_j\cap H_j^g)$.
	\end{enumerate}

	First, we suppose that $\Ga_i$ is a cover of $\Ga_{i+1}$ for $1\leqslant i <n$.
	Then $H_i\cap H_{i+1}^g=H_i\cap H_i^g$ for $1\leqslant i <n$ and thus
	\[\begin{aligned}
		&H_1\cap H_n^g=(H_1\cap H_{n-1})\cap H_n^g=H_1\cap (H_{n-1}\cap H_n^g)=H_1\cap (H_{n-1}\cap H_{n-1}^g)\\
		=&H_1\cap H_{n-1}^g=(H_1\cap H_{n-2})\cap H_{n-1}^g=H_1\cap (H_{n-2}\cap H_{n-1}^g)=H_1\cap (H_{n-2}\cap H_{n-2}^g)\\
		=&\cdots\\
		=&H_1\cap H_2^g=H_1\cap H_1^g.
	\end{aligned}\]
	Since $H_1\cap H_n^g=H_1\cap H_1^g$ and $\val( \Ga_1)=\val( \Ga_n)$, we obtain that $\Ga_1$ is a cover of $\Ga_n$.

	On the other hand, suppose that $\Ga_1$ is a cover of $\Ga_n$.
	Then $H_1\cap H_{n}^g=H_1\cap H_1^g$.
	Since $H_1\cap H_1^g\leqslant H_1\cap H_{n-1}^g\leqslant H_1\cap H_{n}^g=H_1\cap H_1^g$, we have $H_1\cap H_{n-1}^g=H_1\cap H_1^g$, and thus $\Ga_1$ is a cover of $\Ga_{n-1}$.
	In addition, we also have the equality $H_n=H_1(H_n\cap H_n^g)$, and then
	\[H_n=H_n(H_n\cap H_n^g)\geqslant H_{n-1}(H_n\cap H_n^g)\geqslant H_1(H_n\cap H_n^g)=H_n.\]
	Hence $H_n=H_{n-1}(H_n\cap H_n^g)$, and therefore $\Ga_n$ is a cover of $\Ga_{n-1}$.

	Applying the above process with the condition that $\Ga_1$ is a cover of $\Ga_{n-1}$, we obtain that $\Ga_1$ is a cover of $\Ga_{n-2}$ and $\Ga_{n-2}$ is a cover of $\Ga_{n-1}$.
	Therefore, by repeating the above process, we can deduce that $\Ga_i$ is a cover of $\Ga_{i+1}$ for each $1\leqslant i <n$.
\end{proof}

\section{Praeger-Xu's graphs}

In this section, we apply Theorem~\ref{thm:pscover} to characterize Praeger-Xu's graphs in the language of covers and pseudocovers.

Praeger and Xu~\cite{praeger1989characterization} characterized a class of symmetric graphs by proving that if a connected $G$-symmetric graph $\Ga$ of valency $2p$ with $p$ prime such that $G$ has an abelian normal $p$-subgroup $M$ which is not vertex-semiregular, then $\Ga$ belongs to an explicit class of symmetric graphs.
In terminology of coset graphs, Praeger-Xu's graphs can be defined as follows.

\begin{definition}\label{def:Praeger-Xu}
	Let $p$ be a prime and $s,r$ integers such that $r\geqslant 3$ and $1\leqslant s<r$.
	Let $G=M{:}T\cong\ZZ_p^r{:}\D_{2r}$, where
	$M=\l a_0\r\times\dots\times\l a_{r-1}\r\cong\ZZ_p^r$, and $T=\langle x,y\mid x^r=y^2=1,x^y=x^{-1}\rangle\cong \mathrm{D}_{2r}$ such that $a_i^x=a_{i+1}$ and $a_i^y=a_{r-1-i}$ for $i=0,...,r-1$ (reading $i+1\pmod{r}$).
	Define a subgroup $H_s$ and an involution $g_s$ for each value $s\in\{1,2,\dots,r\}$:
	\begin{enumerate}[(1)]
		\item for $s$ even, let $H_s=\langle a_i\mid \frac{s}{2}\leqslant i <r-\frac{s}{2}\rangle{:}\langle y\rangle$, and $g_s=xy$;
		\item for $s$ odd, let $H_s=\langle a_i\mid \frac{s-1}{2}\leqslant i < r-\frac{s+1}{2}\rangle{:}\langle xy\rangle$, and $g_s=y$.
	\end{enumerate}
	Set $C(p,r,s)=\Cos(G,H_s,H_sg_sH_s)$.
\end{definition}

We remark that the graph $C(p,r,s)$ is the Praeger-Xu's graph with the same notation in \cite{praeger1989characterization} and $C(p,r,1)$ is isomorphic to the wreath graph $\mathbf{W}(r,p)=\C_r[\ov\K_p]$.
The following theorem gives a characterization of Praeger-Xu's graphs in the language of covers and pseudocovers.

\begin{theorem}\label{thm:PX-graphs}
	With the notations defined above, the followings hold.
	\begin{enumerate}[{\rm(a)}]
		\item $C(p,r,s)$ is a connected $G$-symmetric graph of order $rp^s$ and valency $2p$. The elementary abelian normal $p$-subgroup $M\lhd G$ is not semiregular on vertices;
		\item $C(p,r,s)$ is a $G$-symmetric pseudocover of $C(p,r,s-2)$, for $3\leqslant s< r$;
		\item $C(p,r,2)$ is a $G$-symmetric cover of $\mathbf{W}(r,p)$
	\end{enumerate}
\end{theorem}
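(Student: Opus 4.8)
The plan is to read everything off from the coset‑graph dictionary of Lemma~\ref{lem:coset-graph} and Proposition~\ref{prop:extender}, and then to apply the criterion of Theorem~\ref{thm:pscover} in parts~(b) and~(c); the content is light and the real work is bookkeeping with the defining index intervals, so I would first record once and for all the conjugation identities $a_i^{x}=a_{i+1}$, $a_i^{y}=a_{r-1-i}$, $a_i^{xy}=a_{r-2-i}$, $y^{xy}=x^{2}y$, $(xy)^{y}=x^{-1}y$, and the fact that $x^{2}\neq1$ whenever $r\geqslant3$, after which each intersection $H\cap H^{g}$ occurring in the argument reduces to intersecting two explicit subintervals of $\{0,1,\dots,r-1\}$. \emph{Part (a).} Both $g_s=xy$ (for $s$ even) and $g_s=y$ (for $s$ odd) are involutions lying outside $H_s$ (their images in $T\cong G/M$ are not in the image of $H_s$), so $C(p,r,s)$ is a well‑defined $G$‑arc‑transitive graph. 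For connectedness I would verify $\langle H_s,g_s\rangle=G$: in either parity $H_s$ and $g_s$ between them contain $y$ and $xy$, hence all of $T=\langle x,y\rangle$, and conjugating the generators $a_i\in H_s$ by powers of $x$ recovers $M$, so Lemma~\ref{lem:coset-graph}(d) applies. The defining interval of $H_s\cap M$ has exactly $r-s$ integers in both parities, so $|H_s|=2p^{r-s}$ and $|G:H_s|=2rp^{r}/(2p^{r-s})=rp^{s}$. Conjugating $H_s$ by $g_s$ one finds the two ``reflection parts'' meet trivially (because $x^{2}\neq1$), so $H_s\cap H_s^{g_s}\leqslant M$ and its index interval is that of $H_s\cap M$ shortened by one; thus $|H_s:H_s\cap H_s^{g_s}|=2p$, i.e.\ $\val(C(p,r,s))=2p$ by Lemma~\ref{lem:coset-graph}(b). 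Finally $M\cong\ZZ_p^{r}\lhd G$ is elementary abelian and $M\cap H_s=\langle a_i\mid\cdots\rangle\neq1$ since $s<r$, so $M$ is not semiregular; core‑freeness of $H_s$ (hence $G\leqslant\Aut C(p,r,s)$, by Lemma~\ref{lem:coset-graph}(a)) follows because $\D_{2r}$ has no nontrivial normal subgroup inside the image of $H_s$.

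\emph{Part (b).} Here $s$ and $s-2$ have the same parity, so $g_s=g_{s-2}=:g$ and, straight from the definition, $H_s<H_{s-2}$ (the defining interval for $H_s$ is a subinterval of that for $H_{s-2}$, with the same reflection generator). By Proposition~\ref{prop:extender}, $C(p,r,s)$ is a $G$‑symmetric extender of $C(p,r,s-2)$, and by part~(a) both valencies equal $2p$, so Theorem~\ref{thm:pscover} applies with $G_\alpha=H_s$, $G_{\overline\alpha}=H_{s-2}$ and $G_{\overline\alpha\overline\beta}=H_{s-2}\cap H_{s-2}^{g}$. I would then refute condition~(c) of that theorem, i.e.\ show $H_{s-2}\neq H_s\,(H_{s-2}\cap H_{s-2}^{g})$. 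Part~(a) already gives $|H_{s-2}\cap H_{s-2}^{g}|=p^{r-s+1}$ with $H_{s-2}\cap H_{s-2}^{g}\leqslant M$, and the interval calculus gives $H_s\cap H_{s-2}^{g}=H_s\cap M$, whence $|H_s\,(H_{s-2}\cap H_{s-2}^{g})|=|H_s|\cdot|H_{s-2}\cap H_{s-2}^{g}|/|H_s\cap M|=2p^{r-s+1}=|H_{s-2}|/p$; concretely some generator $a_i$ of $H_{s-2}\cap M$ lies in neither $H_s$ nor $H_{s-2}\cap H_{s-2}^{g}$. Hence $H_s\,(H_{s-2}\cap H_{s-2}^{g})$ is a proper subset of $H_{s-2}$, so $C(p,r,s)$ is a pseudocover, and not a cover, of $C(p,r,s-2)$.

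\emph{Part (c).} The subtle point is that $\mathbf{W}(r,p)=C(p,r,1)$ comes from the ``odd'' data, with reflection part $\langle xy\rangle$ and $g_1=y$, whereas $C(p,r,2)$ comes from the ``even'' data, with reflection part $\langle y\rangle$ and $g_2=xy$; so $H_2\not<H_1$, and for $r$ even no $G$‑conjugate of $H_2$ lies in $H_1$ either, because $y$ and $xy$ lie in different $\D_{2r}$‑classes of reflections. The plan is therefore to present $\mathbf{W}(r,p)$ via ``even‑type'' data: put $\widetilde H:=\langle a_1,\dots,a_{r-1}\rangle{:}\langle y\rangle$, so that $H_2<\widetilde H$ with the same involution $g_2=xy$. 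One checks (again by Lemma~\ref{lem:coset-graph}) that $\widetilde{\Sigma}:=\Cos(G,\widetilde H,\widetilde Hg_2\widetilde H)$ is a connected $G$‑symmetric graph of order $rp$ and valency $2p$, whose normal quotient by the $M$‑orbits is the cycle $\C_r$ and in which each bipartite fibre joining two adjacent $M$‑orbits is a $p$‑regular bipartite graph on $p+p$ vertices, hence $\K_{p,p}$; therefore $\widetilde{\Sigma}\cong\C_r[\ov\K_p]=\mathbf{W}(r,p)$ (alternatively, by the Praeger--Xu classification~\cite{praeger1989characterization} the only connected $G$‑symmetric graph of order $rp$ and valency $2p$ with $M$ normal and non‑semiregular is $\mathbf{W}(r,p)$). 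By Proposition~\ref{prop:extender}, $C(p,r,2)$ is a $G$‑symmetric extender of $\widetilde{\Sigma}$ of the same valency $2p$, so Theorem~\ref{thm:pscover}(c) applies; this time the computation of $\widetilde H\cap\widetilde H^{g_2}$ gives $|H_2\,(\widetilde H\cap\widetilde H^{g_2})|=|\widetilde H|$ --- the two $M$‑parts together generate $\widetilde H\cap M$ and the reflection $y\in H_2$ supplies the rest --- so $C(p,r,2)$ is a cover of $\mathbf{W}(r,p)$.

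\emph{Where the difficulty lies.} Conceptually the theorem is light: Theorem~\ref{thm:pscover} does the real work once the graphs are written as coset graphs. The main obstacle is part~(c), where the standard coset presentations of $C(p,r,2)$ and $C(p,r,1)$ have incompatible reflection parts, forcing a change of presentation and then a separate certification that the new quotient $\widetilde{\Sigma}$ really is the wreath graph --- either by the $\K_{p,p}$‑fibre argument above or by citing Praeger--Xu. The remaining, purely clerical, nuisance running through all three parts is carrying the two cases ``$s$ even'' and ``$s$ odd'' through the interval arithmetic $[\lceil s/2\rceil,\,r-1-\lfloor s/2\rfloor]$ under conjugation by $g_s$; the conjugation formulas recorded at the outset make each such verification mechanical.
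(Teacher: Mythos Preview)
Your proof is correct and follows the same overall strategy as the paper: read off order, valency, connectedness and non-semiregularity from the coset-graph dictionary for part~(a), then apply Theorem~\ref{thm:pscover} for parts~(b) and~(c). In part~(b) you invoke criterion~(c) of Theorem~\ref{thm:pscover} (the failure of the factorization $G_{\overline\alpha}=G_\alpha G_{\overline\alpha\overline\beta}$) where the paper uses the equivalent criterion~(d); this is purely cosmetic.

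The only substantive divergence is in part~(c). Both you and the paper recognize that the standard presentations of $C(p,r,2)$ and $C(p,r,1)$ have incompatible reflection parts, so a new coset presentation of $\mathbf{W}(r,p)$ with $H_2$ as a point stabilizer is needed. The paper takes $T=\bigl(\langle a_0a_{r-1}\rangle\times\langle a_1,\dots,a_{r-2}\rangle\bigr){:}\langle y\rangle$, with the somewhat artificial diagonal generator $a_0a_{r-1}$, and then checks criterion~(d) directly; you instead take the more natural $\widetilde H=\langle a_1,\dots,a_{r-1}\rangle{:}\langle y\rangle$ and check criterion~(c). Your choice makes the interval bookkeeping slightly cleaner, and your $\K_{p,p}$-fibre argument gives a self-contained identification of $\widetilde\Sigma$ with $\mathbf{W}(r,p)$, whereas the paper relies on the Praeger--Xu classification for that step. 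Either route is fine.
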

\begin{proof}
	\begin{enumerate}[{\rm (a)}]
		\item Obviously, $C(p,r,s)$ is a connected $G$-symmetric graph. Since $H_s\cong\mathbb{Z}_{p}^{r-s}{:}\mathbb{Z}_2$, the graph $C(p,r,s)$ has order $rp^{s}$.
		The abelian normal $p$-subgroup $M\lhd G$ is not semiregular on vertices since $M\cap H_s\cong \mathbb{Z}_{p}^{r-s}\neq 1$.
		
		When $s$ is even, we obtain that
		\[\begin{aligned}
			&H_s^{g_s}=H_s^{xy}=\left\langle a_i\mid  \frac{s}{2}-2< i \leqslant r-\frac{s}{2}-2\right\rangle{:}\langle x^2y\rangle\mbox{ and }\\
			&H_s\cap H_s^{g_s}=\left\langle a_i\mid\frac{s}{2}\leqslant i<r-\frac{s}{2}-1\right\rangle\cong \mathbb{Z}_p^{r-s-1}.
		\end{aligned}\]
		Hence $H_s\cap H_s^g\cong\mathbb{Z}_p^{r-s-1}$ and $C(p,r,s)$ has valency equal to $|H_s:H_s\cap H_s^{g_s}|=2p$.

		When $s$ is odd, similarly the graph $C(p,r,s)$ has valency $2p$ since
		\[\begin{aligned}
			&H_s^{g_s}=H_s^y=\left\langle a_i\mid \frac{s-1}{2}< i \leqslant r-\frac{s+1}{2}\right\rangle{:}\langle yx\rangle\mbox{ and }\\
			&H_s\cap H_s^{g_s}=\left\langle a_i\mid\frac{s+1}{2}\leqslant i<r-\frac{s+1}{2}\right\rangle\cong \mathbb{Z}_p^{r-s-1}.
		\end{aligned}\]
		We complete the proof of the part~(a).

		\item Note that $H_{s-2}<H_s$ for $3\leqslant s< r$, $C(p,r,s)$ is a $G$-symmetric extender of $C(p,r,s-2)$ for $3\leqslant s< r$. Recall the equivalence between part~(a) and (d) of Theorem~\ref{thm:pscover}, since each graph is of valency $4$, $C(p,r,s)$ is a pseudocover of $C(p,r,s-2)$ if and only if $H_{s}\cap H_{s-2}^{g_s}\neq H_{s}\cap H_s^{g_s}$.
		
		When $s$ is even, we notice that $a_{r-\frac{s}{2}-1}\in H_s$ by the definition and $a_{r-\frac{s}{2}-1}\in H_{s-2}\cap H_{s-2}^{g_s}$ given in part~(a).
		Thus $a_{r-\frac{s}{2}-1}\in H_{s}\cap H_{s-2}^{g_s}$.
		However, $a_{r-\frac{s}{2}-1}\notin H_{s}\cap H_{s}^{g_s}$ by part~(a), namely $H_{s}\cap H_{s-2}^{g_s}\neq H_{s}\cap H_s^{g_s}$.
		Therefore, $C(p,r,s)$ is a pseudocover of $C(p,r,s-2)$.
		
		When $s$ is odd, for the similar reasons that $a_{{\frac{s+1}{2}}}$ is the element in $H_{s}\cap H_{s-2}^{g_s}$ but not in $H_s\cap H_{s-2}^{g_s}$.
		Thus $H_{s}\cap H_{s-2}^{g_s}\neq H_{s}\cap H_s^{g_s}$, and therefore $C(p,r,s)$ is a pseudocover of $C(p,r,s-2)$ for all $3\leqslant s<r$, as in part~(b).

		\item Let $H=H_2=\langle a_1,...,a_{r-2}\rangle{:}\langle y\rangle$ and $g=g_2=xy$. Set
		\[T=\bigl(\langle a_0a_{r-1}\rangle\times\langle a_1,...,a_{r-2}\rangle\bigr){:}\langle y\rangle>H_2\]
		then $T\cong\mathbb{Z}_p^{r-1}{:}\mathbb{Z}_2$ and
		\[T^{g}=\bigl(\langle a_{r-2}a_{r-1}\rangle\times\langle a_{r-3},a_{r-4}...,a_0\rangle\bigr){:}\langle y^{xy}\rangle.\]
		Thus
		\[T\cap T^{g}=\langle a_0a_{r-1}a_{r-2}\rangle\times\langle a_1,...,a_{r-3}\rangle\cong\mathbb{Z}_p^{r-2}.\]
		Hence $\Sig=\mathrm{Cos}\bigl(G,T,T g T\bigr)$ has order $rp$ and valency $2p$.
		Furthermore, $M\lhd G$ is not semiregular on $V\Sig$ since $M\cap T\neq 1$.
		Then $\Sig$ is isomorphic to $C(p,r,1)\cong \mathbf{W}(r,p)$.
		Moreover, $H\cap T^{g}=\langle a_1\cdots, a_{r-3}\rangle= H\cap H^{g}$.
		Therefore, $C(p,r,2)=\mathrm{Cos}\bigl(G,H,H g H\bigr)$ is a cover of $\Sig\cong C(p,r,1)=\mathbf{W}(r,p)$ by the equivalence of part~(a) and (d) of Theorem~\ref{thm:pscover}.
	\end{enumerate}
We complete the proof.
\end{proof}

\begin{proof}[Proof of Corollary~$\ref{cor:PX-graphs}$:]
	The corollary follows from Theorem~\ref{thm:PX-graphs} and Lemma~\ref{lem:chain} with the single exception given in the part~(c) of Theorem~\ref{thm:PX-graphs}.
\end{proof}

\section{Pseudocovers of tetravalent graphs}\label{sec:psval4}

Let $\Sig$ be a connected $G$-symmetric tetravalent graph with $G\leqslant \Aut\Sig$ and $(\omega,\omega')$ an arc of $\Sig$.
Then $\Sig=\mathrm{Cos}\bigl(G,G_\omega,G_\omega g G_\omega\bigr)$ where $g\in G$ such that $(\omega,\omega')^g=(\omega',\omega)$ and the arc stabilizer of $G$ is $G_{\o\o'}=G_\o\cap G_{\o'}=G_\o\cap G_\o^g$.

As usual, let $G_\omega^{[1]}$ be the core of $G_\omega$ acting on $\Sig(\omega)$, which is the pointwise stabilizer of $G_\omega$ on $\Sig(\omega)$.
Then $G_\o^{[1]}\lhd G_{\o}$, and the permutation group $G_\o^{\Sig(\o)}$ induced by $G_\o$ on $\Sig(\o)$ is such that
\[G_\o^{\Sig(\o)}\cong G_\o/G_\o^{[1]}.\]
Since $\Sig$ is tetravalent and $G$-symmetric, $G_\o^{\Sig(\o)}$ is a transitive permutation group of degree $4$.

Observe that $G_\o^{[1]},G_{\o'}^{[1]}\lhd G_{\o\o'}$, and
\[G_{\o\o'}^{\Sig(\o)}\cong G_{\o\o'}/G_{\o}^{[1]}\cong G_{\o\o'}/G_{\o'}^{[1]}\cong G_{\o\o'}^{\Sig(\o')}.\]
The intersection $G_\o^{[1]}\cap G_{\o'}^{[1]}$ fixes each vertex in $\Sig(\o)\cup\Sig(\o')$, which is denoted by $G_{\o\o'}^{[1]}$.
Obviously, $G_{\o\o'}^{[1]}\lhd G_{\o\o'}$, and
\[G_{\o\o'}/G_{\o\o'}^{[1]}\cong G_{\o\o'}^{\Sig(\o)\cup\Sig(\o')}\]
is the permutation group on the set $\Sig(\o)\cup\Sig(\o')$ induced by the arc stabilizer $G_{\o\o'}$.

Poto{\v c}nik characterized the vertex stabilizer of $s$-arc-transitive tetravalent graphs $\Sig$ of $s\geqslant 2$ (see~\cite{potocnik2009list}).
The sizes of vertex stabilizers of such graphs are bounded by $2^43^6$.
If $\Ga$ is a connected $G$-symmetric pseudocover of $\Sig$ with $G\leqslant\Aut\Sig$, then $\Ga$ is not $(G,s)$-arc-transitive for $s\geqslant 2$ by Corollary~\ref{coro:locallyprim}.
Thus the size of vertex stabilizer of $G$ on $\Ga$ is small and no more than $16$.
Hence we focus on the case that $\Sig$ is not $(G,2)$-arc-transitive in this section.
Then $G_\o$ is a $2$-group and there exist infinite families of such graphs with $|G_\o|$ unbounded, Praeger-Xu's graphs $C(2,r,s)$ for instance.

\begin{lemma}\label{lem:tetra-1}
	If $G_\o$ is a $2$-group, then either $G_\o^{\Sig(\o)}\cong\D_8$ and $G_{\o\o'}^{\Sig(\o)}\cong\ZZ_2$, or $G_\o\cong\ZZ_2^2$ or $\ZZ_4$.
\end{lemma}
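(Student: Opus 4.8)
The plan is to first pin down the permutation group $G_\o^{\Sig(\o)}$ abstractly, and then split into two cases according to its isomorphism type. Since $G_\o$ is a $2$-group, so is its quotient $G_\o^{\Sig(\o)}\cong G_\o/G_\o^{[1]}$, and by hypothesis this is a transitive permutation group of degree $4$. Up to permutational isomorphism the transitive $2$-subgroups of $\S_4$ are exactly $\ZZ_4$ (generated by a $4$-cycle), the regular Klein four-group $\ZZ_2^2$, and the Sylow $2$-subgroup $\D_8$; so $G_\o^{\Sig(\o)}$ is one of these three. If $G_\o^{\Sig(\o)}\cong\D_8$, then a point stabilizer in this degree-$4$ action has order $|\D_8|/4=2$, and since $G_{\o\o'}^{\Sig(\o)}$ is precisely the stabilizer of $\o'$ in $G_\o^{\Sig(\o)}$, we obtain $G_{\o\o'}^{\Sig(\o)}\cong\ZZ_2$, which is the first alternative of the lemma.

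It then remains to handle the case $G_\o^{\Sig(\o)}\cong\ZZ_4$ or $\ZZ_2^2$, where the action on $\Sig(\o)$ is \emph{regular}. I would show that this forces $G_\o^{[1]}=1$, and hence $G_\o\cong G_\o^{\Sig(\o)}\cong\ZZ_4$ or $\ZZ_2^2$, giving the second alternative. The argument is the standard local-to-global propagation. For any neighbour $\d$ of $\o$, the stabilizer of $\d$ in the regular group $G_\o^{\Sig(\o)}$ is trivial, so $G_{\o\d}=G_\o^{[1]}$; by vertex-transitivity the local action at $\d$ is permutationally isomorphic to that at $\o$, hence also regular, so likewise $G_{\o\d}=G_{\d\o}=G_\d^{[1]}$. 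Therefore $G_\o^{[1]}=G_\d^{[1]}$ for every $\d\in\Sig(\o)$, and since the analogous statement holds at every vertex, an induction on the distance $d(\o,v)$ along a path in the connected graph $\Sig$ yields $G_\o^{[1]}=G_v^{[1]}\leqslant G_v$ for every vertex $v$. Thus $G_\o^{[1]}$ lies in the kernel of the action of $G$ on $V\Sig$, which is trivial since $G\leqslant\Aut\Sig$; hence $G_\o^{[1]}=1$, as required.

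The classification of transitive $2$-subgroups of $\S_4$ and the $\D_8$ point-stabilizer computation are routine, so the real content — and the step to be handled with care — is the regular case. The delicate points there are that $G_{\o\d}$ equals the \emph{full} pointwise stabilizer $G_\o^{[1]}$ of $\Sig(\o)$ (this uses regularity of the local action, not merely transitivity), and that the resulting equalities $G_\o^{[1]}=G_\d^{[1]}$ for adjacent vertices can be chained along paths to reach all of $V\Sig$. Since the notation $G_\o^{\Sig(\o)}$, $G_\o^{[1]}$, $G_{\o\o'}^{[1]}$ has already been set up in this section, the bookkeeping is light and no new machinery is needed beyond connectivity and vertex-transitivity.
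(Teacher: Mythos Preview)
Your proof is correct and follows essentially the same approach as the paper: classify the transitive $2$-subgroups of $\S_4$, read off the point stabilizer in the $\D_8$ case, and in the regular case show $G_\o^{[1]}=G_\d^{[1]}$ for adjacent vertices and propagate by connectivity to force $G_\o^{[1]}=1$. The only cosmetic difference is that the paper obtains $G_\o^{[1]}=G_{\o'}^{[1]}$ in one line by conjugating with the arc-reversing element $g$ (so $G_\o^{[1]}=G_{\o\o'}=G_{\o\o'}^{\,g}=G_{\o'}^{[1]}$), whereas you invoke vertex-transitivity to transfer regularity to the neighbour; both are standard and equivalent.
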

\begin{proof}
	Since $G_\o$ is a $2$-group, the induced permutation group $G_\omega^{\Sig(\omega)}\cong G_\omega/G_\omega^{[1]}$ is a transitive $2$-group on $4$ elements. Hence $G_\omega^{\Sig(\omega)}\cong \D_8$, $\mathbb{Z}_2^2$ or $\mathbb{Z}_4$.
	
	Assume that $G_\omega^{\Sig(\omega)}\cong \D_8$.
	Then $G_{\omega\omega'}^{\Sig(\omega)}\cong\mathbb{Z}_2$ is the stabilizer of $G_\omega^{\Sig(\omega)}$ on $4$ elements.
	
	Assume that $G_\omega^{\Sig(\omega)}\cong\mathbb{Z}_2^2$ or $\mathbb{Z}_4$.
	Then $\left|G_\omega^{\Sig(\omega)}\right|=4$ and $G_{\omega\omega'}^{\Sig(\omega)}=1$.
	Hence $G_{\omega\omega'}=G_\omega^{[1]}$ since $G_{\omega\omega'}^{\Sig(\omega)}=G_{\omega\omega'}/G_{\omega}^{[1]}$.
	It follows from $(\omega,\omega')^g=(\omega',\omega)$ that $G_{\omega}^{[1]}=G_{\omega\omega'}= G_{\omega\omega'}^g=\left(G_{\omega}^{[1]}\right)^g=G_{\omega'}^{[1]}$, and thus $G_{\omega}^{[1]}=G_{\omega'}^{[1]}$.
	Since $\Sig$ is a connected $G$-symmetric graph and $(\omega,\omega')$ is an arc of $\Sig$, this implies that $G_{\omega}^{[1]}=G_{\omega_0}^{[1]}$ for any $\omega_0\in V\Sig$.
	Thus $G_{\omega}^{[1]}$ fixes all the vertices of $\Sig$, and hence $G_{\omega}^{[1]}=1$.
	Therefore, $G_\omega\cong G_{\omega}^{\Sig(\omega)}\cong \mathbb{Z}_2^2$ or $\mathbb{Z}_4$.
\end{proof}

\begin{lemma}\label{lem:tetra-2}
	If $G_\o$ is a $2$-group of order less than $16$, then $G_\o\cong G_\o^{\Sig(\o)}\cong\D_8$, $\mathbb{Z}_2^2$ or $\mathbb{Z}_4$, and $\Sig$ does not have a $G$-symmetric pseudocover.
\end{lemma}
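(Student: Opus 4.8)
The goal is to show that if $G_\omega$ is a $2$-group of order less than $16$, then $\Sigma$ has no $G$-symmetric pseudocover. By Lemma~\ref{lem:tetra-1}, the hypothesis on $|G_\omega|$ forces $G_{\omega}^{[1]}=1$ (the case $G_\omega^{\Sig(\o)}\cong\D_8$ of order $8$ needs a separate small check, but in all cases we land on $G_\omega\cong G_\omega^{\Sig(\o)}$ being $\D_8$, $\ZZ_2^2$, or $\ZZ_4$). So the first step is to record that $G_\omega$ acts \emph{faithfully} on $\Sig(\o)$; equivalently $G_\omega$ is core-free not only in $G$ but already in its action on the neighbourhood. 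The second step is to invoke Corollary~\ref{coro:locallyprim} or rather its underlying mechanism: a pseudocover $\Ga$ of $\Sigma$ corresponds, via Proposition~\ref{prop:extender}, to a subgroup $L<H=G_\omega$ with $g\in G\setminus L$, $g^2\in L$, such that $L$ is \emph{intransitive} on $\Sig(\o)\cong\Ga_\calB(\ov\a)$ (Theorem~\ref{thm:pscover}(b)), while $\val(\Ga)=\val(\Sigma)=4$, i.e. $|L:L\cap L^g|=4$.

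**Key steps.**
First I would reduce to understanding proper subgroups $L$ of $H\in\{\D_8,\ZZ_2^2,\ZZ_4\}$ and ask when $|L:L\cap L^g|=4$ is even possible: since $|L:L\cap L^g|$ divides $|L|\leqslant 8$ and must equal $4$, we need $|L|\in\{4,8\}$, and if $|L|=4$ then $L\cap L^g=1$. But $|L:L\cap L^g|\leqslant|H:H\cap H^g|=4=\val(\Sigma)$ with equality here, and a counting/divisibility argument (the index $|H:L|$ is the block size, and the valency is preserved) pins down $|L|$. Second, for each candidate $(H,L)$ I would check transitivity of $L$ on the $4$-element set $\Sig(\o)$ on which $H$ acts faithfully and transitively: if $|L|=4=|H|$ (the cases $H\cong\ZZ_2^2,\ZZ_4$ and $L=H$), then $L$ is transitive, so no pseudocover; if $H\cong\D_8$ then the only subgroup $L$ with $|L:L\cap L^g|=4$ would need $|L|=8$, forcing $L=H$ again, or $|L|=4$ with $L\cap L^g=1$ — but every index-$2$ subgroup of $\D_8$ is normal, hence $L=L^g$, contradiction. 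So in every case $L=H$ and $\Ga=\Sigma$, meaning no proper pseudocover exists. The final step is to assemble these into the statement: the only extender with the same valency is $\Sigma$ itself, hence no pseudocover, and along the way $G_\omega\cong G_\omega^{\Sig(\o)}$ as claimed.

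**Main obstacle.**
The delicate point is handling $H\cong\D_8$ cleanly. When $G_\omega^{\Sig(\o)}\cong\D_8$ we do \emph{not} immediately get $G_\omega^{[1]}=1$ from Lemma~\ref{lem:tetra-1}; we only know $|G_\omega|<16$, so $|G_\omega^{[1]}|\in\{1,2\}$ with $|G_\omega|\in\{8,16\}$ — but $16$ is excluded by hypothesis, so $|G_\omega|=8$ and $G_\omega^{[1]}=1$, giving $G_\omega\cong\D_8$ faithfully. Once faithfulness is secured, the rest is a finite check over the (few) subgroups of $\D_8$, $\ZZ_2^2$, $\ZZ_4$, using that $L\cap L^g$ has index $4$ in $L$ — impossible for $|L|\leqslant 4$ unless $L\cap L^g=1$, which fails because the relevant index-$2$ subgroups are normal and $L$-invariant under conjugation, while $|L|=8$ forces $L=H$ and then $L$ is transitive. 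I would also make explicit that conjugation by $g$ is an automorphism of $G$ but need not normalise $H$; nonetheless it sends $L$ to $L^g$ and the arithmetic constraint $|L:L\cap L^g|=|H:H\cap H^g|$ with $L<H$ is the real engine. This is the one spot where a careless argument could miss a putative disconnected example; but here we are only asserting nonexistence of \emph{any} $G$-symmetric pseudocover for this fixed $G$, and the subgroup bookkeeping is genuinely finite and forced.
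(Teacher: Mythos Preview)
Your overall plan (reduce to $G_\omega\cong G_\omega^{\Sig(\o)}\in\{\D_8,\ZZ_2^2,\ZZ_4\}$, then classify candidate subgroups $L<H$) is reasonable and the $|H|=4$ cases are fine, but the $\D_8$ case contains a genuine gap. You write ``every index-$2$ subgroup of $\D_8$ is normal, hence $L=L^g$''; however $g\notin H$, so $L\lhd H$ says nothing about $L^g$. You yourself flag that conjugation by $g$ need not normalise $H$, yet you never replace the broken implication with a working argument. Concretely, with $|H|=8$ and $|H\cap H^g|=2$, it is entirely consistent \emph{a priori} that some $L<H$ of order $4$ satisfies $L\cap L^g=1$; normality in $H$ does not prevent this.

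The paper's proof avoids this subgroup-by-subgroup analysis altogether. From Theorem~\ref{thm:pscover}(d) one gets, for a pseudocover, that $G_{\alpha\beta}$, $G_{\alpha\overline\beta}$, $G_{\overline\alpha\beta}$ are pairwise distinct; since $G_{\alpha\beta}\leqslant G_{\alpha\overline\beta}\leqslant G_{\overline\alpha\,\overline\beta}$ and the second inclusion is also strict (if $L\cap H^g=H\cap H^g$ then conjugating by $g$ gives $H\cap L^g=H\cap H^g$, forcing $G_{\alpha\overline\beta}=G_{\overline\alpha\beta}$), one obtains $|G_{\omega\omega'}|\geqslant 4|G_{\alpha\alpha'}|\geqslant 4$, hence $|G_\omega|\geqslant 16$. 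This three-term chain is exactly the ``arithmetic constraint'' you allude to as the real engine, but you never actually deploy it; doing so is what closes the $\D_8$ case.
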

\begin{proof}
	Since $\Sig$ is $G$-symmetric with valency $4$, the order of $G_\o$ is divisible by $4$.
	Hence $|G_\o|=4$ or $8$.
	By Lemma~\ref{lem:tetra-1} $G_\o\cong \mathbb{Z}_2^2$ or $\mathbb{Z}_4$ if $|G_\o|=4$, and $G_{\o}^{\Sig(\omega)}\cong \D_8$ if $|G_\o|=8$.
	When $|G_\o|=8$, the kernel $G_{\omega}^{[1]}$ has size $|G_{\omega}^{[1]}|=|G_\o|/|G_{\o}^{\Sig(\omega)}|=1$.
	Thus $G_\omega\cong G_{\o}^{\Sig(\omega)}/G_\o^{[1]}\cong \D_8$.

	Suppose $\Ga$ is a connected $G$-symmetric pseudocover and $(\alpha,\alpha')$ is an arc in $\Ga$ such that $(\omega,\omega')=(\overline{\alpha},\overline{\alpha'})$.
	Then $G_{\alpha\alpha'}<G_{\alpha\omega'}< G_{\omega\omega'}$ by part~(d) of Theorem~\ref{thm:pscover}.
	Thus $|G_{\omega\omega'}|/|G_{\alpha\alpha'}|\geqslant 4$, $|G_{\omega\omega'}|\geqslant 4$ and $|G_\o|\geqslant 4\times 4=16$.
	Hence, $\Sig$ has no connected $G$-symmetric pseudocover when $|G_\o|<16$.
\end{proof}

Now we start to construct connected $G$-symmetric pseudocovers of a tetravalent graph $\Sig=\mathrm{Cos}\bigl(G,G_\omega,G_\omega g G_\omega\bigr)$, where $G\leqslant\Aut\Sig$ and $G_\o$ is a $2$-group of size divisible by $16$.

\begin{lemma}\label{lem:tetra-3}
	Let $G_\o$ be a $2$-group of order at least $16$.
	Then the following statements hold:
	\begin{enumerate}[{\rm(a)}]
		\item $G_{\o\o'}/G_{\o\o'}^{[1]}\cong G_{\o\o'}^{\Sig(\o)\cup\Sig(\o')}\cong G_{\o\o'}^{\Sig(\o)}\times G_{\o\o'}^{\Sig(\o')}\cong \ZZ_2\times\ZZ_2$;
		\item there exist elements $x\in G_\o\setminus G_{\o\o'}$ and $y\in G_{\omega\omega'}\setminus G_{\omega}^{[1]}$ such that
		\[G_\o=\l G_{\o\o'},x\r=\l G_\o^{[1]},x,y\r\ \  \text{ and }\ \ G_{\omega}^{[1]}=\langle G_{\omega\omega'}^{[1]},y^g\rangle,\]
		and further, $G_{\o\o'}/G_\o^{[1]}=\l\ov y\r\cong\ZZ_2$, and $G_\o/G_\o^{[1]}=\l\ov x\r{:}\l\ov y\r\cong\D_8$;
		\item there exists $\widetilde{g}\in G_\o gG_\o$ such that $G_\o gG_\o=G_\o \widetilde{g} G_\o$  and $\widetilde{g}^2\in G_\o^{[1]}$.
	\end{enumerate}
\end{lemma}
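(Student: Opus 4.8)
The plan is to deduce all three statements from the structure of the finite $2$-group $G_\o$, viewed through the normal chain $G_{\o\o'}^{[1]}\lhd G_\o^{[1]},\,G_{\o'}^{[1]}\lhd G_{\o\o'}\lhd G_\o$, together with the way the arc-reversing element $g$ (which swaps $\o$ and $\o'$, normalises $G_{\o\o'}$ and $G_{\o\o'}^{[1]}$, and satisfies $g^2\in G_{\o\o'}$) acts on this chain. Since $|G_\o|\geqslant 16>8$, Lemma~\ref{lem:tetra-1} gives $G_\o^{\Sig(\o)}\cong\D_8$ and $G_{\o\o'}^{\Sig(\o)}\cong\ZZ_2$; by symmetry $G_{\o'}^{\Sig(\o')}\cong\D_8$ and $G_{\o\o'}^{\Sig(\o')}\cong\ZZ_2$ as well. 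Thus $G_\o/G_\o^{[1]}\cong\D_8$, and $G_{\o\o'}/G_\o^{[1]}\cong\ZZ_2$ is the image of a point stabiliser inside this $\D_8$.

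For part~(a), restriction to $\Sig(\o)$ and to $\Sig(\o')$ gives a homomorphism $G_{\o\o'}\to G_{\o\o'}^{\Sig(\o)}\times G_{\o\o'}^{\Sig(\o')}\cong\ZZ_2\times\ZZ_2$ with kernel $G_{\o\o'}^{[1]}$ (by definition) and with both coordinate projections onto; hence its image is a subdirect subgroup of $\ZZ_2\times\ZZ_2$, so it is either the whole group, which is exactly~(a), or the diagonal. The diagonal case would say that an element of $G_{\o\o'}$ fixes $\Sig(\o)$ pointwise precisely when it fixes $\Sig(\o')$ pointwise, i.e.\ $G_\o^{[1]}=G_{\o'}^{[1]}$; but then, exactly as in the proof of Lemma~\ref{lem:tetra-1}, conjugating this equality along the arcs of the connected graph $\Sig$ forces $G_\o^{[1]}$ to fix every vertex, so $G_\o^{[1]}=1$ and $|G_\o|=|\D_8|=8$, a contradiction. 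This proves~(a), and it records three facts I will use below: $G_\o^{[1]}\neq G_{\o'}^{[1]}$; neither of $G_\o^{[1]},G_{\o'}^{[1]}$ contains the other (conjugate by $g$); and $G_\o^{[1]}/G_{\o\o'}^{[1]}$ is a subgroup of order $2$ of $G_{\o\o'}/G_{\o\o'}^{[1]}\cong\ZZ_2\times\ZZ_2$.

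For part~(b), I would take $y\in G_{\o'}^{[1]}\setminus G_\o^{[1]}$ (nonempty by the previous paragraph, and contained in $G_{\o\o'}$) and $x\in G_\o$ whose image in $G_\o/G_\o^{[1]}\cong\D_8$ generates the cyclic subgroup of order $4$; then $x\notin G_{\o\o'}$ and $y\in G_{\o\o'}\setminus G_\o^{[1]}$, and reducing modulo $G_\o^{[1]}$ gives $G_{\o\o'}/G_\o^{[1]}=\l\ov y\r\cong\ZZ_2$ and $G_\o/G_\o^{[1]}=\l\ov x\r{:}\l\ov y\r\cong\D_8$, whence $G_\o=\l G_{\o\o'},x\r=\l G_\o^{[1]},x,y\r$. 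Since $y\in G_{\o'}^{[1]}$ is trivial on $\Sig(\o')$ but nontrivial on $\Sig(\o)$, conjugating by $g$ puts $y^g\in G_\o^{[1]}\setminus G_{\o\o'}^{[1]}$; as $|G_\o^{[1]}:G_{\o\o'}^{[1]}|=2$ this gives $G_\o^{[1]}=\l G_{\o\o'}^{[1]},y^g\r$. For part~(c), consider $Q:=\l G_{\o\o'},g\r/G_{\o\o'}^{[1]}$, a group of order $8$ with normal subgroup $N:=G_{\o\o'}/G_{\o\o'}^{[1]}\cong\ZZ_2\times\ZZ_2$ whose two factors are $G_\o^{[1]}/G_{\o\o'}^{[1]}=\l\ov u\r$ and $G_{\o'}^{[1]}/G_{\o\o'}^{[1]}=\l\ov v\r$, and on which $\ov g$ acts by the swap $\ov u\leftrightarrow\ov v$. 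As $\ov g^2\in N$ commutes with $\ov g$, necessarily $\ov g^2\in\{1,\ov u\ov v\}$. If $\ov g^2=1$, put $\widetilde g=g$; if $\ov g^2=\ov u\ov v$, put $\widetilde g=gu$ for any $u\in G_\o^{[1]}$ lifting $\ov u$, and compute $(\ov g\,\ov u)^2=\ov g^2\,(\ov g^{-1}\ov u\,\ov g)\,\ov u=\ov u\ov v\cdot\ov v\cdot\ov u=1$, so that $\widetilde g^2\in G_{\o\o'}^{[1]}\leqslant G_\o^{[1]}$. In both cases $\widetilde g\in gG_{\o\o'}\subseteq G_\o gG_\o$, so $G_\o gG_\o=G_\o\widetilde g G_\o$, and~(c) follows.

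The main obstacle is part~(a): ruling out the diagonal for $G_{\o\o'}^{\Sig(\o)\cup\Sig(\o')}$, equivalently showing $G_\o^{[1]}\neq G_{\o'}^{[1]}$. This is the only place where connectedness of $\Sig$ and faithfulness of $G$ on $V\Sig$ are genuinely used, and everything in~(b) and~(c) is then routine bookkeeping in the two order-$8$ quotients above. The one delicate point in that bookkeeping is to keep every element used to modify $g$ inside $G_{\o\o'}$, so that the double coset $G_\o gG_\o$ — hence the graph $\Sig$ itself — is unchanged, while still forcing the square into $G_\o^{[1]}$.
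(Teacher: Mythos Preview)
Your proof is correct and follows essentially the same route as the paper: parts~(a) and~(b) are virtually identical to the paper's argument (same choice of $y\in G_{\o'}^{[1]}\setminus G_{\o\o'}^{[1]}$, same connectedness argument to rule out $G_\o^{[1]}=G_{\o'}^{[1]}$). The only genuine variation is in~(c): the paper sets $\widetilde g=gy$ and computes $\widetilde g^2=zy^2y^{gy}\in G_\o^{[1]}$ directly, whereas you pass to the order-$8$ quotient $\l G_{\o\o'},g\r/G_{\o\o'}^{[1]}$, observe that $\ov g^2$ lies in the centraliser $\{1,\ov u\,\ov v\}$ of the swap, and correct by an element of $G_\o^{[1]}$ instead of $G_{\o'}^{[1]}$ --- this is a cleaner bookkeeping device and even yields the slightly stronger $\widetilde g^2\in G_{\o\o'}^{[1]}$, but the underlying idea (multiply $g$ by something in $G_{\o\o'}$ to kill the square modulo $G_\o^{[1]}$) is the same.
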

\begin{proof}
	\begin{enumerate}[{\rm(a)}]
		\item By Lemma~\ref{lem:tetra-1}, we have $G_\o/G_\o^{[1]}\cong\D_8$ and $G_{\o\o'}/G_\o^{[1]}\cong\ZZ_2$.
		Since $G_\o^{[1]}\lhd G_{\o\o'}$, we obtain that
		\[G_{\o'}^{[1]}=(G_\o^{[1]})^g\lhd G_{\o\o'}^g=G_{\o\o'}\mbox{ and }|G_{\o\o'}:G_{\o'}^{[1]}|=|G_{\o\o'}:G_\o^{[1]}|=2.\]
		Note that $G_{\o'}^{[1]}\not=G_\o^{[1]}$ since $\Sig$ is connected and $(\o,\o')$ is an arc.
		Hence $G_{\o\o'}^{[1]}<G_{\o}^{[1]}\cong G_{\o'}^{[1]}$.
		Therefore, $|G_{\omega\omega'}:G_{\omega\omega'}^{[1]}|=4$ and $G_{\o\o'}/G_{\o\o'}^{[1]}\cong G_{\o\o'}^{\Sig(\o)\cup\Sig(\o')}\leqslant G_{\o\o'}^{\Sig(\o)}\times G_{\o\o'}^{\Sig(\o')}=\ZZ_2\times\ZZ_2$.

		\item Recall that $G_{\o\o'}^{[1]}\lhd G_{\omega'}^{[1]}\lhd G_{\o\o'}$ such that $|G_{\o\o'}:G_\o^{[1]}|=|G_\o^{[1]}:G_{\o\o'}^{[1]}|=|G_{\o'}^{[1]}: G_{\omega\omega'}^{[1]}|=2$ by part~(a).
		Let $y\in G_{\o'}^{[1]}\setminus G_{\omega\omega'}^{[1]}$.
		Then $y^2\in G_{\omega\omega'}^{[1]}\leqslant G_\o^{[1]}$, and
		\[G_{\omega'}^{[1]}=\langle G_{\omega\omega'}^{[1]},y\rangle,\ \ \ G_{\omega}^{[1]}=\langle (G_{\omega\omega'}^{[1]})^g,y^g\rangle=\langle G_{\omega\omega'}^{[1]},y^g\rangle.\]
		Observe that $G_{\omega\omega'}=G_\o^{[1]} G_{\o'}^{[1]}=\langle G_{\o\o'}^{[1]},y,y^g\rangle$ and $G_{\omega\omega'}/G_{\omega\omega'}^{[1]}\cong\mathbb{Z}_2^2$ by part~(a).
		Since $y^2$ and $(y^2)^g$ are both in $G_{\omega\omega'}^{[1]}$, it follows that $y\in G_{\omega\omega'}\setminus G_{\omega}^{[1]}$ and $G_{\omega\omega'}/G_{\omega}^{[1]}=\langle \overline{y}\rangle$.
		Since $G_{\o\o'}$ is the stabilizer of $G_{\o}$ acting on $\Sig(\omega)$ and $G_{\o}/G_\o^{[1]}\cong\mathrm{D}_8$, there exists $x\in G_\o$ such that $G_{\o}/G_\o^{[1]}=\langle \overline{x}\rangle{:}\langle \overline{y}\rangle\cong \mathrm{D}_8$.
		\item If $g^2\in G_\o^{[1]}$, then part~(c) holds with $\widetilde{g}=g$.
		We thus suppose that $g^2\notin G_\o^{[1]}$.
		Since $g^2\in G_{\o\o'}=\l G_\o^{[1]},y\r$, we have that $g^2=zy$ for some $z\in G_\o^{[1]}$.
		Let $\widetilde{g}=gy\in G_\o gG_\o$, then $G_\o gG_\o=G_\o \widetilde{g} G_\o$.
		Recall that $G_{\omega}^{[1]}=\langle G_{\omega\omega'}^{[1]},y^g\rangle\lhd G_\omega$ in part~(b), then $(y^g)^y\in G_\o^{[1]}$. Therefore, we have $\widetilde{g}^2=g^2y^gy=zyy^gy=zy^2y^{gy}\in G_\o^{[1]}$ as $z,y^2,y^{gy}\in G_\o^{[1]}$.
	\end{enumerate}
	We complete the proof.
\end{proof}

Now we are ready to construct pseudocovers of $\Sig$.

\begin{construction}\label{cons:pcoverval4}
	Using the notations in Lemma~\ref{lem:tetra-3}, let $g=\widetilde{g}$ and
	\[L=\l G_\o^{[1]},xy\r < \l G_\o^{[1]},x,y\r = G_\o.\]
	Define $\Ga=\mathrm{Cos}\bigl(G,L,L g L\bigr)$.
\end{construction}
\begin{figure}[!ht]
	\centering
	\includegraphics[scale=0.9]{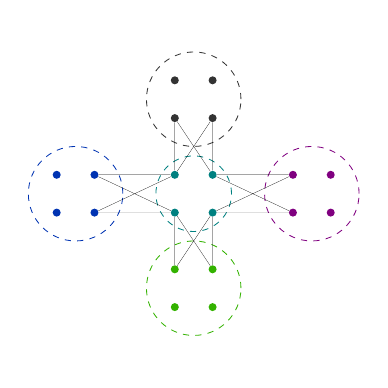}
	\caption{Neighborhoods of a block in Construction~\ref{cons:pcoverval4}}\label{fig:4cover}
\end{figure}
\begin{lemma}\label{lem:tetra-4}
	The graph $\Ga$ defined in Construction~\ref{cons:pcoverval4} is a connected $G$-symmetric pseudocover of $\Sig=\Cos(G,G_\omega,G_\omega gG_\omega)$.
\end{lemma}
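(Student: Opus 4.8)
The plan is to verify three things in order: that $\Ga=\Cos(G,L,LgL)$ is a well-defined connected coset graph, that it has valency $4$ (equal to $\val(\Sig)$), and that it fails to be a cover, using the criterion in part~(d) of Theorem~\ref{thm:pscover}. First I would check well-definedness and symmetry: since $g=\widetilde g$ with $\widetilde g^2\in G_\o^{[1]}\leqslant L$ by Lemma~\ref{lem:tetra-3}(c), the element $g$ satisfies $g^2\in L$, so $\Cos(G,L,LgL)$ is a legitimate symmetric coset graph, and $G$ acts arc-transitively on it. For connectedness, by Lemma~\ref{lem:coset-graph}(d) I must show $G=\l L,g\r$; since $\Sig$ is connected we have $G=\l G_\o,g\r=\l G_\o^{[1]},x,y,g\r$, so it suffices to recover $x$ and $y$ from $L=\l G_\o^{[1]},xy\r$ and $g$. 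The element $y$ (or a suitable conjugate/product) should come out of the relation $G_{\o}^{[1]}=\l G_{\o\o'}^{[1]},y^g\r$ together with $g\in\l L,g\r$, giving $y^g\in\l L,g\r$ hence $y\in\l L,g\r$; then $x=(xy)y^{-1}\in\l L,g\r$, so $\l L,g\r\supseteq\l G_\o^{[1]},x,y,g\r=G$.

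Next I would compute the valency via Lemma~\ref{lem:coset-graph}(b), i.e.\ show $|L:L\cap L^g|=4$. Here the key structural facts are $|L:G_\o^{[1]}|=2$ (since $L=\l G_\o^{[1]},xy\r$ and $(xy)^2\in G_\o^{[1]}$, as $xy$ maps to an element of order dividing $2$ in $G_\o/G_\o^{[1]}\cong\D_8$ — one must check $\ov{xy}$ is an involution in $\D_8$, which holds since $\ov x$ has order $4$, $\ov y$ has order $2$, and in $\D_8=\l\ov x\r{:}\l\ov y\r$ the element $\ov{xy}$ is a reflection), and that $L\cap L^g$ is forced down to $G_{\o\o'}^{[1]}$ or a group of the same index. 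Concretely, $L\cap L^g\leqslant G_\o\cap G_\o^g=G_{\o\o'}$, and one identifies $L\cap G_{\o\o'}$: since $xy\notin G_{\o\o'}$ (as $\ov x\notin\l\ov y\r$ in $\D_8$, so $\ov{xy}$ is not the reflection fixing $\o'$), we get $L\cap G_{\o\o'}=G_\o^{[1]}$, and then $L\cap L^g\leqslant G_\o^{[1]}\cap(G_\o^{[1]})^g=G_{\o\o'}^{[1]}$; conversely $G_{\o\o'}^{[1]}$ is normal in $G_{\o\o'}$ hence centralized-enough to lie in both $L$ and $L^g$, giving $L\cap L^g=G_{\o\o'}^{[1]}$. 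Since $|L:G_\o^{[1]}|=2$ and $|G_\o^{[1]}:G_{\o\o'}^{[1]}|=2$ by Lemma~\ref{lem:tetra-3}(a), we obtain $|L:L\cap L^g|=4=\val(\Sig)$, so $\Ga$ is an extender of $\Sig$ of the same valency (note $L<G_\o$, so by Proposition~\ref{prop:extender} $\Sig$ is indeed a quotient).

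Finally I would show $\Ga$ is a pseudocover, not a cover, via part~(d) of Theorem~\ref{thm:pscover}: I must show $G_{\alpha\o'}$, $G_{\o\alpha'}$, $G_{\alpha\alpha'}$ are pairwise distinct, where $\alpha\leftrightarrow L$, $\alpha'\leftrightarrow Lg$, $\o\leftrightarrow G_\o$, $\o'\leftrightarrow G_\o g$. Here $G_{\alpha\alpha'}=L\cap L^g=G_{\o\o'}^{[1]}$, $G_{\alpha\o'}=L\cap G_\o^g=L\cap G_{\o\o'}^g$, and $G_{\o\alpha'}=G_\o\cap L^g=(L\cap G_{\o\o'})^g\cdot$-conjugate $=G_\o^{[1]}$ by the computation $L\cap G_{\o\o'}=G_\o^{[1]}$ applied after conjugating. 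So it reduces to showing $G_\o^{[1]}\neq G_{\o\o'}^{[1]}$, which holds since $\Sig$ is connected (this is exactly Lemma~\ref{lem:tetra-3}(a), where $G_{\o\o'}^{[1]}<G_\o^{[1]}$), and $G_{\alpha\o'}=L\cap G_{\o\o'}^g$ is neither of the other two — it contains $(xy)^{g}$-type elements or differs by index $2$ from $G_{\o\o'}^{[1]}$, distinguishing it. The main obstacle I anticipate is the bookkeeping in the last step: correctly tracking which of the three intersections equals $G_{\o\o'}^{[1]}$, which equals $G_\o^{[1]}$ (up to the $G$-action identification), and which is the third distinct group, since $g$ was modified to $\widetilde g$ and the subgroup $L$ is defined by the "diagonal" element $xy$ rather than $x$ — getting the conjugation action of $g$ on $x$, $y$ right (using $G_\o^{[1]}=\l G_{\o\o'}^{[1]},y^g\r\lhd G_\o$ and $(y^g)^y\in G_\o^{[1]}$ from Lemma~\ref{lem:tetra-3}) is where an error is most likely to creep in, and I would want to cross-check against Figure~\ref{fig:4cover}.
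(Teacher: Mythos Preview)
Your argument for connectedness and for the valency computation $|L:L\cap L^g|=4$ is essentially identical to the paper's: you recover $y$ from $y^g\in G_\o^{[1]}\leqslant L$ and then $x=(xy)y^{-1}$, and you identify $L\cap G_{\o\o'}=G_\o^{[1]}$ to force $L\cap L^g=G_{\o\o'}^{[1]}$, exactly as in the published proof.

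The one genuine difference is in the final step. The paper invokes part~(b) of Theorem~\ref{thm:pscover}: since $L/G_\o^{[1]}=\l\ov{xy}\r\cong\ZZ_2$ while $|\Sig(\ov\o)|=4$, the subgroup $L=G_\alpha$ is visibly intransitive on $\Sig(\o)$, and the pseudocover conclusion is immediate in one line. You instead aim for part~(d), computing the three stabilizers $G_{\alpha\alpha'}=G_{\o\o'}^{[1]}$, $G_{\alpha\o'}=L\cap G_{\o\o'}=G_\o^{[1]}$, and $G_{\o\alpha'}=(L\cap G_{\o\o'})^g=G_{\o'}^{[1]}$ and checking they are pairwise distinct. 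This is correct, but note a slip in your write-up: you record $G_{\o\alpha'}=G_\o^{[1]}$ when it should be $(G_\o^{[1]})^g=G_{\o'}^{[1]}$; once that is fixed the three groups $G_{\o\o'}^{[1]}<G_\o^{[1]}\neq G_{\o'}^{[1]}>G_{\o\o'}^{[1]}$ are indeed pairwise distinct (the inequality $G_\o^{[1]}\neq G_{\o'}^{[1]}$ being the connectedness of $\Sig$, as you say). Your route works but is more laborious; the paper's use of~(b) sidesteps exactly the bookkeeping you flagged as the likely trouble spot.
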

\begin{proof}
	First, the graph $\Ga$ is connected since
	\[\begin{array}{rcl}
	\l L,g\r&=&\l G_\o^{[1]},xy,g\r\\
	&=& \l G_\o^{[1]},G_{\o'}^{[1]},xy,g\r, \ \mbox{as $(G_\o^{[1]})^g=G_{\o'}^{[1]}$}\\
	&=& \l G_\o^{[1]}, y^g,xy,g\r,\ \mbox{as $G_{\o'}^{[1]}=\langle G_{\omega\omega'}^{[1]},y^g \rangle$}\\
	&=& \l G_\o^{[1]},x,y,g\r\\
	&=&\l G_\o,g\r,\ \mbox{as $G_{\o}=\l G_\o^{[1]},x,y\r$}\\
	&=&G.
	\end{array}\]

	Next, noticing that $xy\notin G_{\o\o'}$ and $(xy)^2\in G_\o^{[1]}$, we have that $L\cap G_{\o\o'}=\l G_\o^{[1]},xy\r\cap G_{\o\o'}=G_\o^{[1]}$, and so $L^g\cap G_{\o\o'}=G_{\o'}^{[1]}$.
	Since $L<G_\o$, we have $L^g<G_\o^g=G_{\o'}$, and so
	\[L\cap L^g = (L\cap L^g)\cap G_{\o\o'}=G_\o^{[1]}\cap G_{\o'}^{[1]}=G_{\o\o'}^{[1]}.\]
	Therefore, recalling that $|G_\o^{[1]}:G_{\o\o'}^{[1]}|=2$, we have
	\[|L:L\cap L^g|=|L:G_{\o\o'}^{[1]}|=|L:G_\o^{[1]}||G_\o^{[1]}:G_{\o\o'}^{[1]}|=4,\]
	and so $\Ga=\Cos(G,L,LgL)$ is of valency equal to $|L:L\cap L^g|=4$.

	Finally, since $(xy)^2=y^2\in G_\o^{[1]}$, we have $L^{\Sig(\o)}\cong L/G_\o^{[1]}\cong\ZZ_2$.
	Thus $L$ is intransitive on $\Sig(\o)$, and $\Ga$ is a pseudocover of $\Sig$ by the equivalence between part~(a) and (b) of Theorem~\ref{thm:pscover}.
\end{proof}

Use the notations in Construction~\ref{cons:pcoverval4}, we remark that $L/G_\o^{[1]}=\langle\bar{x}\bar{y}\rangle\leqslant \D_8\cong G_\o^{\Sig(\o)}$ is isomorphic to $\mathbb{Z}_2$.
There are exactly $5$ subgroups of order $2$ in $G_\o^{\Sig(\o)}=\langle\overline{x},\overline{y}\rangle\cong\D_8$.
Let
\[L_1=\langle G_\o^{[1]},x^3y\rangle, L_2=\langle G_\o^{[1]},x^2y\rangle, L_3=\langle G_\o^{[1]},y\rangle=G_{\o\o'}\mbox{ and }L_4=\langle G_\o^{[1]},x^2\rangle.\]
We can define graphs $\Ga_i=\mathrm{Cos}(G,L_i,L_igL_i)$ for $i=1,2,3,4$. By conjugating $x$ on $G$, we have $\Ga_1\cong\Ga$ and $\Ga_2\cong\Ga_3$.
Note that $\Ga_3$ is disconnected since $\langle L_3,g\rangle=\langle G_{\o\o'},g\rangle\neq G$, and so is $\Ga_2$.
Furthermore, since $x^2$ and $g$ both normalize $\langle G_\o^{[1]},y\rangle=G_{\o\o'}$, it follows that $\langle L_4,g\rangle=\langle G_\o^{[1]},x^2,g\rangle\leqslant \langle G_\o^{[1]},y,x^2,g\rangle \leqslant N_G(G_{\omega\o'})\neq G$, and hence $\Ga_4$ is also disconnected.

\begin{lemma}\label{lem:tetra-5}
	Let $|G_\o|=2^s\geqslant16$.
	\begin{enumerate}[{\rm(a)}]
		\item $\Ga$ has a connected pseudocover of order $|G|/4$ if $s$ is even, and
		\item $\Ga$ has a connected pseudocover of order $|G|/8$ if $s$ is odd.
	\end{enumerate}
\end{lemma}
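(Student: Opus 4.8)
The plan is to produce the required graph by iterating Construction~\ref{cons:pcoverval4} as often as the size of the vertex stabiliser allows, and then simply to count. Set $\Ga_0=\Sig$, $\Ga_1=\Ga$, and, as long as it is legitimate, let $\Ga_j$ be the graph obtained by applying Construction~\ref{cons:pcoverval4} to $\Ga_{j-1}$, with vertex stabiliser $L_j$ (so $L_1=L$, and $L_j\leqslant G_\o$ for all $j$). By Lemma~\ref{lem:tetra-4}, each $\Ga_j$ is then a connected $G$-symmetric pseudocover of $\Ga_{j-1}$ of valency $4$. The quantitative heart of the matter is that one application divides the order of the vertex stabiliser by exactly $4$: the new stabiliser is $\langle L_{j-1}^{[1]},x_{j-1}y_{j-1}\rangle$, and since $L_{j-1}/L_{j-1}^{[1]}\cong\D_8$ (Lemma~\ref{lem:tetra-1}, valid because $|L_{j-1}|\geqslant 16>4$) and $\langle\overline{x_{j-1}y_{j-1}}\rangle\cong\ZZ_2$, this subgroup has index $4$. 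Construction~\ref{cons:pcoverval4} applies to $\Ga_{j-1}$ exactly while $L_{j-1}$ is a $2$-group of order at least $16$; the first condition is automatic, and $|L_{j-1}|=2^{s-2(j-1)}$, so the iteration runs for $1\leqslant j\leqslant n$, where $n$ is the largest index with $2^{s-2(n-1)}\geqslant 16$. For $s$ even this gives $n=(s-2)/2$ and $|L_n|=2^{s-2n}=4$; for $s$ odd it gives $n=(s-3)/2$ and $|L_n|=8$. Therefore $|V\Ga_n|=|G:L_n|$ equals $|G|/4$ when $s$ is even and $|G|/8$ when $s$ is odd, which is what the lemma asks for.

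It then remains to check that $\Ga_n$ is a connected $G$-symmetric pseudocover of $\Sig$ (and, when $n\geqslant 2$, also of $\Ga$; for $n=1$ the graph $\Ga$ itself already has the stated order by Lemma~\ref{lem:tetra-4}). Connectedness and valency come from Lemma~\ref{lem:tetra-4} at the last step. For the extender/pseudocover property I would first fix one swap element serving every $\Ga_j$ at once: by Lemma~\ref{lem:tetra-3}(c) the element that replaces the swap at stage $j$ lies in the $L_{j-1}$-double coset of the previous swap, hence inductively inside $G_\o g G_\o$, so the element $g$ surviving from the final stage satisfies $\Ga_j=\Cos(G,L_j,L_j g L_j)$ for all $j$ simultaneously, with $g^2\in L_n^{[1]}\leqslant L_n$. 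Now $L_n<L_{n-1}<\dots<L_0=G_\o$ is a chain of subgroups all yielding valency $4$; Proposition~\ref{prop:extender} makes $\Ga_n$ a $G$-symmetric extender of $\Sig$, and Lemma~\ref{lem:chain}(b) upgrades the stepwise pseudocover property to `$\Ga_n$ is a pseudocover of $\Sig$'. (One can avoid Lemma~\ref{lem:chain}: since $L_n\leqslant L=L_1$ and, as shown in the proof of Lemma~\ref{lem:tetra-4}, $L^{\Sig(\o)}\cong L/G_\o^{[1]}\cong\ZZ_2$ is intransitive on the four vertices of $\Sig(\o)$, so is $L_n$, whence $\Ga_n$ is a pseudocover of $\Sig$ by the equivalence of parts~(a) and~(b) of Theorem~\ref{thm:pscover}.)

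I expect the genuine work to be in making the iteration rigorous---verifying that the hypotheses of Lemma~\ref{lem:tetra-3} really propagate from $\Ga_{j-1}$ to $\Ga_j$ (that $L_j$ stays a $2$-group, which is clear; that $|L_j|=2^{s-2j}$ on the nose, which rests on $L_j/L_j^{[1]}\cong\D_8$ whenever $|L_j|\geqslant 16$; and, if one proceeds via Lemma~\ref{lem:chain}, that the successive swap elements align into a single $g$ as above)---together with the routine but fiddly tracking of which double cosets the various swap elements land in. Once this bookkeeping is settled the counting is immediate, and the dichotomy `$s$ even' versus `$s$ odd' is exactly the parity that decides whether the descending sequence $2^s,2^{s-2},2^{s-4},\dots$ of stabiliser orders terminates at $4$ or at $8$; one may also note that any $G$-symmetric graph of valency $4$ has vertex stabiliser of order divisible by $4$, so $|G|/4$ is the largest order such a pseudocover could possibly have, and it is achieved precisely in the even case.
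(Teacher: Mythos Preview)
Your approach is the paper's: iterate Construction~\ref{cons:pcoverval4}, observe that each step divides the stabiliser order by exactly $4$ (the paper records this as the computation $|G_\o:L|=4$), and then invoke Lemma~\ref{lem:tetra-4} together with Lemma~\ref{lem:chain} to pass from the stepwise pseudocovers to the global one. You are considerably more explicit than the paper, which compresses the whole iteration into a single sentence; in particular your attention to aligning the successive swap elements into one common $g$ (required for Lemma~\ref{lem:chain} to apply as stated) is a detail the paper leaves implicit.
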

\begin{proof}
	By definition, the index $|G_\o:L|$ is equal to
	\[\frac{|G_\o|}{|L|}=\frac{|G_\o/G_\o^{[1]}|}{|L/G_\o^{[1]}|}=\frac{|\l\ov x\r{:}\l\ov y\r|}{|\l\ov{x}\,\ov{y}\r|}=\frac{8}{2}=4.\]
	Thus $|V\Ga|=|G:L|=|G:G_\o||G_\o:L|=4|G:G_\o|=4|V\Sig|$.
	The proof of the lemma then follows from Lemma~\ref{lem:tetra-4} and Lemma~\ref{lem:chain}.
\end{proof}

Recall that the sizes of vertex stabilizers of $s$-arc-transitive groups with $s\geqslant 2$ on tetravalent graphs divide $2^43^6$ (see~\cite{potocnik2009list}).
Thus the theorem implies that each connected tetravalent symmetric graph with vertex stabilizer of size divisible by $32$ has connected pseudocovers.

\begin{proof}[Proof of Theorem~$\ref{thm:pscover4}$]
	It is known that if the vertex stabilizer is of order divisible by $32$, then the stabilizer is a $2$-group.
	The proof of the theorem consists of Lemmas~\ref{lem:tetra-4} and \ref{lem:tetra-5}.
\end{proof}

We remark that the pseudocover $\Ga$ constructed in Construction~\ref{cons:pcoverval4} is exactly the $\mathrm{Pl}^2(\Sig)$, where $\mathrm{Pl}$ is the operator of symmetric \textit{partial line graph} introduced in~\cite{marusic2001Partial,potocnik2007Tetravalent}.

\begin{example}\label{exam:li4pscover}
	{\rm
	Suppose that $\Sig=\mathrm{Cos}\bigl(G,H,H g H\bigr)$ is a connected tetravalent graph with $H=\langle r,s\mid a^8=b^2=1,a^b=a^{-1}\rangle\cong \mathrm{D}_{16}$, $g^2=1$ and $H\cap H^g=\langle a^4,b\rangle\cong \mathbb{Z}_2^2$.
	Let $L=\langle a^4,ab\rangle\cong \mathbb{Z}_2^2$ then $\Ga=\mathrm{Cos}\bigl(G,L,L g L\bigr)$ is a connected pseudocover of $\Sig$.

	Li, Praeger and Zhou constructed such pairs $(\Ga,\Sig)$ by letting $G=\mathrm{PSL}(2,p)$ with $p\equiv 1\pmod{16}$ in~\cite[Construction 3.5]{li2010Imprimitive}.
	\qed
	}
\end{example}

\begin{example}
	{\rm
	Let $\Sig=C(2,r,s)$ be the Praeger-Xu's graph of valency $4$ with $s\geqslant 3$ and $r\geqslant s+1$.
	The pseudocover $\Ga$ of $\Sig$ constructed as in Construction~\ref{cons:pcoverval4} is exactly the graph $C(2,r,s-2)$, which has been already discussed in Theorem~\ref{thm:PX-graphs}.
	\qed
	}
\end{example}

\bigskip

\noindent{\bf Acknowledgements}
This work was partially supported by NNSFC grant no.~11931005.

\medskip
\noindent{\bf Data availability}
This manuscript has no associated data.

\section*{Declarations}

\noindent{\bf Conflict of interest} The authors declare that they have no conflicts of interest.

\medskip

\end{document}